\documentclass[11pt]{amsart}
\usepackage[T1]{fontenc}
\usepackage{lmodern}
\usepackage{amsmath,amssymb,mathtools}
\usepackage{microtype}
\usepackage{needspace}
\usepackage[margin=1in]{geometry}
\usepackage[hidelinks]{hyperref}
\newtheorem{theorem}{Theorem}[section]
\newtheorem{proposition}[theorem]{Proposition}
\newtheorem{lemma}[theorem]{Lemma}
\newtheorem{corollary}[theorem]{Corollary}
\theoremstyle{remark}
\newtheorem{remark}[theorem]{Remark}
\newcommand{\N}{\mathbb N}
\newcommand{\Q}{\mathbb Q}
\newcommand{\R}{\mathbb R}
\newcommand{\B}{\mathcal B}
\newcommand{\A}{\mathcal A}
\newcommand{\C}{\mathcal C}
\newcommand{\Aut}{\operatorname{Aut}}
\newcommand{\End}{\operatorname{End}}
\newcommand{\Epi}{\operatorname{Epi}}
\newcommand{\Hom}{\operatorname{Hom}}
\newcommand{\Inn}{\operatorname{Inn}}
\newcommand{\Subg}{\operatorname{Subg}}
\newcommand{\Pro}{\operatorname{Pro}_{\omega}}
\newcommand{\GrpEpi}{\mathsf{GrpEpi}}
\newcommand{\mGraph}{\mathsf{mGraph}}
\newcommand{\mGraphEpi}{\mathsf{mGraphEpi}}
\newcommand{\Ebd}{E_{\mathrm{bd}}}
\newcommand{\perm}{\mathrm{perm}}
\newcommand{\topiso}{\cong_{\mathrm{top}}}
\newcommand{\pciso}{\cong_{\mathrm{pc}}}
\newcommand{\naiso}{\cong_{\mathrm{NA}}}
\newcommand{\PC}{\mathsf{PC}}
\newcommand{\restr}{\mathbin{\upharpoonright}}
\newcommand{\norm}[1]{\left\lVert #1\right\rVert}
\newcommand{\doi}[1]{\href{https://doi.org/#1}{\nolinkurl{#1}}}
\title[Isomorphism of procountable groups]{Topological isomorphism of procountable groups is universal analytic}

\author{Xinan Dai}
\address{College of Future Information Technology,
Fudan University, Shanghai, China}
\curraddr{Department of Artificial Intelligence,\newline
School of Engineering, Westlake University, Hangzhou, China}
\email{xndai23@m.fudan.edu.cn}

\author{Wenhao Deng}
\address{University of Glasgow, Glasgow, United Kingdom}
\curraddr{Department of Artificial Intelligence,\newline
School of Engineering, Westlake University, Hangzhou, China}
\email{dengwenhao@westlake.edu.cn}

\author{Yingdong Shi}
\address{School of Information Science and Technology,
ShanghaiTech University, Shanghai, China}
\email{shiyd2023@shanghaitech.edu.cn}

\author{Tailin Wu}
\address{Department of Artificial Intelligence,\newline
School of Engineering, Westlake University, Hangzhou, China}
\email{wutailin@westlake.edu.cn}

\author{Yuchen Yang}
\address{Department of Artificial Intelligence,\newline
School of Engineering, Westlake University, Hangzhou, China}
\email{yangyuchen@westlake.edu.cn}

\date{\today}

\subjclass[2020]{Primary 03E15; Secondary 22A05, 20B27, 54H05, 46B15}
\keywords{Procountable group, non-Archimedean Polish group, Borel reducibility, universal analytic equivalence relation, unconditional basic sequence}
\hypersetup{pdftitle={Topological isomorphism of procountable groups is universal analytic},pdfsubject={Descriptive set theory and topological groups}}
\begin{document}
\raggedbottom
\begin{abstract}
We prove that topological isomorphism of procountable groups is a universal analytic equivalence relation, answering a question of Gao, Nies, and Paolini. The same conclusion follows for non-Archimedean Polish groups. More strongly, there is one countable group $H$ for which universality already holds among inverse limits of sequences of surjective endomorphisms of $H$. Thus all the classification complexity can be carried by the bonding maps. We encode permutative equivalence of unconditional basic sequences by integer weights with prescribed symmetries. An extension of Gao, Nies, and Paolini's tree construction recovers bounded weight differences from uniform continuity while allowing these symmetries to act. Prze\'zdziecki's almost-full functor transfers the resulting inverse graph systems to groups. Identifying the stage graphs with a single graph makes the passage from weights to bonding endomorphisms continuous.
\end{abstract}
\maketitle

\section{Introduction}

A topological group is \emph{procountable} if it is topologically isomorphic to the inverse limit of a sequence of countable discrete groups with surjective bonding homomorphisms \cite[Definition~1.1]{GNP}. This class contains both countable discrete groups and metrizable profinite groups. Passing from these subclasses to general inverse limits changes the complexity of the isomorphism problem. Our purpose is to determine that complexity using Borel reducibility.

Write $\N=\{0,1,2,\ldots\}$ and $\N_{>0}=\{1,2,\ldots\}$. A non-Archimedean Polish group is a separable, completely metrizable topological group whose identity has a neighborhood basis of open subgroups. Becker and Kechris proved that these are precisely the closed subgroups of $S_\infty=\operatorname{Sym}(\N)$, up to topological isomorphism, where $S_\infty$ has the pointwise-convergence topology \cite[Theorem~1.5.1]{BK}. A standard Borel space is a set equipped with the Borel $\sigma$-algebra of a Polish topology. The Effros Borel space $\mathcal F(S_\infty)$ of closed subsets is standard Borel \cite[Section~12.C]{Kechris}, and its subset $\Subg(S_\infty)$ of closed subgroups is Borel \cite[Lemma~2.5]{KNT}. We use this parameter space for non-Archimedean Polish groups, and write $\PC$ for its subclass of procountable groups.

Borel reducibility makes the classification problem precise. For equivalence relations $E$ and $F$ on standard Borel spaces $X$ and $Y$, we write $E\leq_B F$ if there is a Borel map $f:X\to Y$ such that
\begin{equation}\label{eq:borel-reduction}
 x\mathrel E x'\quad\Longleftrightarrow\quad f(x)\mathrel F f(x')
 \qquad(x,x'\in X).
\end{equation}
If both $E\leq_BF$ and $F\leq_BE$, then $E$ and $F$ are \emph{Borel bireducible}. A subset of a standard Borel space $X$ is \emph{analytic} if it is the projection of a Borel subset of $X\times\N^\N$; a relation is analytic if its graph is analytic. An equivalence relation is \emph{classifiable by countable structures} if it Borel reduces to isomorphism of countable structures in a fixed countable language. It is \emph{universal analytic}, also called \emph{complete analytic}, if it is analytic and every analytic equivalence relation on a standard Borel space Borel reduces to it. These notions belong to the framework developed by Friedman and Stanley \cite{FS} and Louveau and Rosendal \cite{LR}; see also \cite{Gao,Kechris}.

Write $G\topiso H$ for topological group isomorphism, and let $\pciso$ and $\naiso$ be its restrictions to $\PC$ and $\Subg(S_\infty)$, respectively. The analytic upper bounds are known.

\begin{proposition}\label{prop:ambient}
The class $\PC$ is Borel in $\Subg(S_\infty)$, and $\naiso$ is analytic. Consequently, $\pciso$ is analytic.
\end{proposition}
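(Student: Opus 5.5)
We need to show two things: that $\PC$ is Borel in $\Subg(S_\infty)$, and that $\naiso$ is analytic. The final claim then follows at once, since the restriction of an analytic relation to a Borel subset is analytic.

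\emph{Borelness of $\PC$.} We first choose a characterization of procountability that can be read off a countable basis. A non-Archimedean Polish group $G$ is procountable iff it has a neighborhood basis of the identity consisting of open \emph{normal} subgroups. Indeed, if $G=\varprojlim G_n$, the kernels of the projections form such a basis. Conversely, given a decreasing basis $N_0\supseteq N_1\supseteq\cdots$ of open normal subgroups, each $G/N_n$ is countable because $G$ is separable, and completeness shows that $G\cong\varprojlim G/N_n$.

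For $G\le S_\infty$ closed, the pointwise stabilizers $G_{(F)}$, for $F\subseteq\N$ finite, form a basis of open subgroups. By Kuratowski--Ryll-Nardzewski we fix Borel selectors $d_k:\Subg(S_\infty)\to S_\infty$ with $\{d_k(G)\}_k$ dense in $G$. The core $\bigcap_{g\in G} gG_{(F)}g^{-1}$ is a normal subgroup contained in $G_{(F)}$, and an open normal subgroup inside $G_{(F)}$ exists iff this core is open. The core is open iff it contains some $G_{(F')}$. So the condition to express is
\[
\forall F\ \exists F'\ \forall k:\quad d_k(G)\,G_{(F')}\,d_k(G)^{-1}\subseteq G_{(F)}.
\]
Because $d_k(G)$ is dense in $G$ and $G_{(F)}$ is open, it suffices to test conjugation by the dense set. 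The inclusion is tested by asking whether $\varphi\in G$ with $\varphi\restr F'=\mathrm{id}$ has $d_k\varphi d_k^{-1}$ fixing $F$. This depends only on finite data of $\varphi$. Concretely, it says that for every basic open set $U$ of $S_\infty$ determined by a finite partial map that is the identity on $F'$ and moves some point $x\in d_k(F)$ appropriately, we have $U\cap G=\emptyset$. The relation $U\cap G\neq\emptyset$ is Borel in $G$ by the definition of the Effros structure. Using countably many quantifiers over finite sets, basic open sets, and $k$, this gives a Borel condition.

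\emph{Analyticity of $\naiso$.} We have $G\topiso H$ iff there exist $\pi\in S_\infty$-like data witnessing an isomorphism. Concretely, $G\topiso H$ iff there is a sequence $(h_k)\in S_\infty^{\N}$ such that the following hold:
\begin{itemize}
\item $h_k\in H$ for all $k$;
\item the assignment $d_k(G)\mapsto h_k$ extends to a topological isomorphism.
\end{itemize}
The second condition means that the map is multiplicative on the dense set, is uniformly bicontinuous in terms of the basic subgroups, and has dense image. Each clause is a countable Boolean combination of Borel conditions in $(G,H,(h_k))$. Multiplicativity, for instance, reads: for all $i,j,l$ and all $\varepsilon$ given by finite sets, closeness of $d_i d_j$ to $d_l$ forces closeness of $h_ih_j$ to $h_l$. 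The relation is then the projection of a Borel set along $S_\infty^\N$, so it is analytic.

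The main obstacle is making the extension criterion precise. A map on a dense subset of a Polish group extends to a continuous homomorphism iff it is uniformly continuous for the left uniformity and multiplicative. Bijectivity needs both the forward map and an inverse to be uniformly continuous, with dense image. One might alternatively use the standard fact that isomorphism of closed subgroups is induced by an $S_\infty$-action on coded structures, but the explicit dense-set coding above is the route I would take.
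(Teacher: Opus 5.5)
The paper gives no argument of its own for this proposition. It cites \cite[Proposition~3.3]{GNP} for the Borelness of $\PC$ and \cite[p.~1191]{KNT} for the analyticity of $\naiso$, then restricts to $\PC$ exactly as in your last step. You instead reprove both cited facts, so yours is a genuinely different, self-contained route. The paper's version is short and rests on published arguments; yours makes the proposition independent of them at the cost of some Effros coding.

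Your Borelness argument works. The characterization by a basis of open normal subgroups is correct: normality makes chosen coset representatives two-sided Cauchy, so completeness applies. The core criterion is correct. The reduction to conjugation by the Kuratowski--Ryll-Nardzewski selectors is legitimate, because $\{g\in G: gG_{(F')}g^{-1}\subseteq G_{(F)}\}$ is closed. One slip: $d_k\varphi d_k^{-1}$ fixes $F$ iff $\varphi$ fixes $d_k^{-1}(F)$, so the finite test should use $d_k^{-1}(F)$, not $d_k(F)$. This is harmless, since the inverses of the selectors are also dense.

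In the analyticity part, the extension step you flag is the one place that needs a real argument. Completeness does not give it from one-sided uniform continuity alone. A clause of the form $d_i^{-1}d_j\in G_{(F')}\Rightarrow h_i^{-1}h_j\in H_{(F)}$ only makes the images of a convergent sequence converge pointwise to an injection $\N\to\N$. $S_\infty$ is not complete for that uniformity: the cycles $(0\,1\,\cdots\,n)$ converge pointwise to the non-surjective shift $k\mapsto k+1$.

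The simplest repair is to add the mirror clause $d_id_j^{-1}\in G_{(F')}\Rightarrow h_ih_j^{-1}\in H_{(F)}$, which every continuous homomorphism satisfies. The map is then uniformly continuous for the two-sided uniformity, in which Polish groups are complete, so it extends continuously to a map $G\to H$. Your approximate multiplicativity, with a modulus $F'$ depending only on $F$, then passes to the limit. Your original clauses in fact also suffice, by a different argument. Composition of injections is continuous for pointwise convergence, so the pointwise limits form a monoid homomorphism from $G$ into the injections of $\N$. The image of $1$ is then an injective idempotent, hence the identity, and so every value is invertible.

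For surjectivity, the inverse clause makes the extension a homeomorphism onto its image. That image is a Polish, hence closed, subgroup of $H$ containing the dense set $\{h_k\}$, so it equals $H$. With these points made explicit, your clauses define a Borel set whose projection is $\naiso$.
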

\begin{proof}
The first assertion is Proposition~3.3 of Gao, Nies, and Paolini \cite{GNP}; the second is recorded by Kechris, Nies, and Tent \cite[p.~1191]{KNT}. Restricting the analytic relation $\naiso$ to the Borel set $\PC$ proves the last assertion.
\end{proof}

\begin{theorem}\label{thm:main}
The relations $\pciso$ and $\naiso$ are universal analytic.
\end{theorem}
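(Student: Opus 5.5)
Since Proposition~\ref{prop:ambient} gives the analytic upper bounds, and $\pciso$ is the restriction of $\naiso$ to the Borel set $\PC$, it suffices to Borel reduce a known universal analytic equivalence relation to $\pciso$; universality of $\naiso$ then follows because $\pciso\le_B\naiso$ via inclusion. As the source I will take permutative equivalence of (normalized) unconditional basic sequences in a Banach space. Its universality is a theorem of Ferenczi, Louveau and Rosendal, and the abstract indicates it is the intended input. Concretely, I will work with a Borel-parametrized family of unconditional bases $(x_n)$ for which the norm is determined by integer data. Permutative equivalence of two bases then means that there are a permutation $\pi$ of $\N$ and a constant $C$ with $\norm{\sum a_nx_n}\approx_C\norm{\sum a_nx'_{\pi(n)}}$. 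After a logarithmic discretization, this becomes: integer weight functions $w,w'$ on a countable index structure, which agree up to a bounded difference after applying a symmetry $\pi$ drawn from a prescribed group of symmetries.

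The second step encodes such weights into inverse systems of countable graphs. I extend the tree construction of Gao, Nies and Paolini. Each stage graph $\Gamma_k$ is built from the weights truncated at level $k$, and the bonding maps are graph epimorphisms $\Gamma_{k+1}\to\Gamma_k$ that collapse according to the weights. The inverse limit, a "procountable graph", must satisfy two conditions. First, an isomorphism of limits has to be uniformly continuous, so it shifts levels by a bounded amount; this recovers $|w-w'\circ\pi|\le C$. Second, conversely, a bounded weight difference together with a symmetry $\pi$ must yield an isomorphism of inverse systems up to a finite level shift, i.e.\ an isomorphism in $\Pro$ of the category $\mGraphEpi$. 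The graphs have to be rigid enough that every isomorphism of limits comes from a permutation of the index set lying in the prescribed symmetry group, while still admitting all those symmetries. I expect this to be the main obstacle: the construction must be rigid against unwanted isomorphisms yet flexible for wanted ones. I would first try decorating tree vertices with rigid gadgets, namely cycles of distinct lengths, that mark the index and level, and letting the symmetries act on the gadget labels.

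The third step transfers from graphs to groups using Prze\'zdziecki's almost-full embedding of graphs into groups. This functor $G$ sends graph epimorphisms to group epimorphisms, and it satisfies $\Hom(G\Gamma,G\Delta)\cong\Hom(\Gamma,\Delta)\times(\text{trivial/inner-type part})$. Applying $G$ levelwise turns the inverse graph systems into inverse systems of countable groups. Almost-fullness then shows that topological isomorphism of the limits corresponds to isomorphism of the graph pro-systems, after the inner automorphisms are factored out; this needs a short argument that inner parts are compatible across levels. Finally I will check Borelness of the whole map into $\Subg(S_\infty)$. To make the weights-to-groups map continuous, I identify all stage graphs with one fixed countable graph and encode each system as a sequence of surjective endomorphisms of a single group $H=G(\Gamma)$. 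The inverse limit of such a sequence can be realized as a closed subgroup of $S_\infty$ Borel-uniformly in the sequence, via the action on $\bigsqcup_k H$. This yields the reduction and proves the theorem.
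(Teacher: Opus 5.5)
Your outline follows the same pipeline as the paper: the Ferenczi--Louveau--Rosendal source, logarithmic weights with symmetries, a Gao--Nies--Paolini style tree system, Prze\'zdziecki's functor applied levelwise, one fixed stage group, and the action on $\N_{>0}\times H$ for Borelness. However, the second step, which carries essentially all of the difficulty, is not supplied, and the one mechanism you name for its hard direction does not work. An isomorphism in $\Pro(\mGraphEpi)$ need not shift levels by a bounded amount. Its index maps are arbitrary strictly increasing functions, and uniform continuity only provides moduli $\Omega_h,\Omega_{h^{-1}}$, which may be nonlinear. For example, on $[T_0]$ you can fix all even branches and send the two halves (split by the first tail bit) of the block attached at depth $r$ along $z_d$ onto the blocks at depths $2r-2$ and $2r-1$. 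This map is uniform in both directions, yet it distorts depths unboundedly. So reading $w(d)$ off a level, as your bonding maps that ``collapse according to the weights'' suggest, cannot by itself give $\sup_d|w(d)-w'(\pi d)|<\infty$.

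The missing idea is to store $a(d)$ as a multiplicity repeated at every depth. Along each distinguished branch $z_d$, at every depth $r\geq2$, the paper attaches a block of $2^{a(d)}$ Cantor cylinders. Fix one source depth $n=\max\{2,\Omega_h(m)\}$ and put $L=\Omega_{h^{-1}}(n+1)$. The images of the $2^{a(d)}$ cylinders at depth $n$ then lie in blocks along $z_{\pi d}$ at depths in $[m,L)$, with distinct $L$-prefixes. Counting prefixes gives $2^{a(d)}<2^{a'(\pi d)+L-m}$, with $L-m$ independent of $d$ (Proposition~\ref{prop:window}). The converse also needs an explicit construction: the binary recutting of Proposition~\ref{prop:recut}, which is $2^C$-bi-Lipschitz and yields pre-morphisms with index $n\mapsto n+C$.

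You also leave open how stage maps are forced to induce exactly the permitted symmetries, and the gadget idea as stated points the wrong way. Cycles of distinct lengths marking each index make the graph rigid and kill the symmetries you need. Level markers sit badly with stage maps whose index shifts are arbitrary, and they obstruct your own final step of identifying every stage with one fixed graph. In the paper the weights are indexed by all of $V=c_{00}(\Q)\setminus\{0\}$, and the permitted group is $\Aut(\A)$, acting by linear extensions of basis permutations.

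The paper builds a single graph $\C$ with $\Aut(\C)\cong\Aut(\A)$, compatibly with $V\hookrightarrow D$ (Lemma~\ref{lem:fixed-graph}). There, cycles of distinct lengths only color the vertex types of an incidence graph of $\A$. Weights are extended by $0$ off $P$. The stage graph $\Gamma_n(a)$ is then the set of level-$n$ nodes of $T_a$, with a loop at each $z_d\restr n$, a copy of the edges of $\C$ among these nodes, and nothing else.
\begin{itemize}
\item Loops force distinguished vertices onto distinguished vertices.
\item Prefix coherence gives one $\pi$ for all stages.
\item The inverse pro-morphism makes $\pi$ a bijection preserving edges in both directions, so $\pi\in\Aut(\C)$.
\item Every stage is $\C$ with loops plus countably many isolated vertices, so all stages are isomorphic to one $\Delta$ (Lemma~\ref{lem:fixed-stage-graph}). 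This is what your fixed-group step needs.
\end{itemize}

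Finally, no compatibility of inner parts across levels is needed in the Prze\'zdziecki step. Uniqueness of the extracted graph map already gives the commuting squares and inverse identities exactly.
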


The conclusion already holds when every stage group is fixed. For a countable discrete group $H$, let $\End(H)$ and $\Epi(H)$ denote its endomorphisms and surjective endomorphisms, respectively, with the pointwise-convergence topology inherited from $H^H$. The homomorphism laws define a closed subspace $\End(H)$, and surjectivity is the intersection, over $h\in H$, of the open conditions $\exists g\in H\ (f(g)=h)$. Hence $\Epi(H)$ and $\mathcal E_H=\Epi(H)^{\N_{>0}}$ are Polish. For $\alpha=(\alpha_n)\in\mathcal E_H$, put
\begin{equation}\label{eq:fixed-group-limit}
 G_H(\alpha)=
 \bigl\{(h_n)\in H^{\N_{>0}}:\alpha_n(h_{n+1})=h_n
 \text{ for every }n\geq1\bigr\},
\end{equation}
with coordinatewise operations and the subspace topology. Define
\begin{equation}\label{eq:fixed-group-relation}
 \alpha\sim_H\beta\quad\Longleftrightarrow\quad
 G_H(\alpha)\topiso G_H(\beta)
 \qquad(\alpha,\beta\in\mathcal E_H).
\end{equation}

\begin{theorem}\label{thm:fixed-group}
There is a countable group $H$ such that $\sim_H$ on $\mathcal E_H$ is universal analytic.
\end{theorem}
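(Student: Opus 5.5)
We follow the route sketched in the abstract. We reduce a known universal analytic relation to $\sim_H$ by a continuous map $\Phi:X\to\mathcal E_H$, in four stages.

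\emph{Source relation.} We start from permutative equivalence of normalized unconditional basic sequences. By results of Ferenczi, Louveau and Rosendal, this relation is universal analytic. We first rewrite it combinatorially. To a sequence $x$ we assign integer weights $w_x:\N\times F\to\mathbb Z$ on a countable index set, for instance logarithms of norms of finite sums rounded to integers. The weights are chosen so that $x$ and $x'$ are permutatively equivalent iff there is a permutation $\pi$, drawn from a prescribed group of admissible symmetries, with $\sup|w_x-w_{x'}\circ\pi|<\infty$. This gives a Borel reduction to a relation $E_w$ on weight functions. $E_w$ is an orbit-type relation: ``bounded difference up to a symmetry.''

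\emph{Inverse graph systems.} Next we extend the tree construction of Gao, Nies and Paolini. To each weight $w$ we associate an inverse sequence of countable graphs $\Gamma_n(w)$ with surjective bonding graph epimorphisms. The weights are encoded as the lengths or heights of branches of a tree, and the stage maps collapse levels. A topological isomorphism of the inverse limits is uniformly continuous, so it induces level maps that shift stages by a bounded amount. We must prove that this bounded shift is exactly a bounded weight difference. Conversely, an admissible symmetry $\pi$ together with a bounded difference should yield an isomorphism that permutes branches. The design requirement is that the only automorphisms available at each level are the prescribed symmetries. Rigid gadgets attached to vertices should force this.

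\emph{Groups and continuity.} We then apply Prze\'zdziecki's almost-full functor $G:\mGraph\to\mathsf{Grp}$. Homomorphisms $G(\Gamma)\to G(\Gamma')$ are, up to inner automorphisms and trivial maps, induced by graph maps, and $G$ preserves surjections. Hence inverse limits of groups reflect isomorphism of the inverse graph systems. The inner-automorphism ambiguity is absorbed when passing to limits, as in GNP's argument. To land in $\mathcal E_H$, we arrange that every $\Gamma_n(w)$ is the same countable graph $\Gamma$, for example a disjoint union of all possible finite gadget configurations. The weights then only determine the bonding epimorphism $\Gamma\to\Gamma$. We set $H=G(\Gamma)$ and $\alpha_n=G(f_n(w))$. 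Since $G$ acts on maps locally, each value $\alpha_n(h)$ depends on finitely many values of $f_n$, which depend on finitely many values of $w$. So $\Phi$ is continuous.

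\emph{Main obstacle.} The hardest step is the rigidity argument in the second stage. An arbitrary topological isomorphism, after passing through the almost-full functor, must be shown to come from an admissible symmetry with bounded weight shift. At the same time, identifying all stages with a single graph must not create spurious isomorphisms. We would first prove this rigidity at the graph level using uniform continuity, and then lift it through the functor.
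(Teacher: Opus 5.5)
There is a genuine gap. Your outline follows the paper's architecture, but the step you defer as the ``main obstacle'' is the substance of the proof, and the mechanism you suggest for it would not work.

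Uniform continuity of the branch-space homeomorphism $h$ induced by a pro-isomorphism gives only moduli $\Omega_h$ and $\Omega_{h^{-1}}$. It does not give a shift of levels bounded independently of the level. For example, take a bijection $f$ of $\N$ with $f(k)-k$ unbounded, and the self-map of $2^{\N}$ fixing $0^\infty$ and sending $0^k1y$ to $0^{f(k)}1y$. This is a uniform homeomorphism that is not bi-Lipschitz. If a weight is recorded as a length or height, i.e.\ as the depth at which some feature appears, such moduli distort it non-additively. Comparing depths then yields bounds like $a'(\pi d)\leq g(a(d))$, with $g$ built from the moduli, rather than $|a(d)-a'(\pi d)|\leq C$.

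The paper instead makes each weight visible as a \emph{count at one fixed depth}. At every depth $r\geq2$ along the distinguished branch $z_d$ it attaches a block of $2^{a(d)}$ Cantor cylinders with binary tails, which has $2^{a(d)}2^{L-r-1}$ prefixes of length $L$. A homeomorphism preserves the union $Q_a$ of the blocks (the points with a compact open neighborhood), and the loops force $h(z_d)=z_{\pi d}$. Fix one $m$ and set $n=\max\{2,\Omega_h(m)\}$ and $L=\Omega_{h^{-1}}(n+1)$. Representatives of the $2^{a(d)}$ cylinders of the depth-$n$ block along $z_d$ then land in blocks along $z_{\pi d}$ attached at depths in $[m,L)$, with pairwise distinct $L$-prefixes. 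Counting gives $2^{a(d)}<2^{a'(\pi d)+L-m}$ for all $d$ simultaneously, and $h^{-1}$ gives the reverse bound. The converse direction recuts bits between the cylinder label and the tail; this map is $2^C$-bi-Lipschitz and gives a pro-isomorphism with $\phi(n)=n+C$. Without this design and counting estimate, or an equivalent one, the hard direction of your reduction is unproved.

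Several supporting steps also need correcting.

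\emph{Weights.} If your weights use norms of $0/1$ sums only, they do not detect permutative equivalence: the unit vector bases of $\ell_p$ and of the Lorentz space $\ell_{p,q}$, $q\neq p$, have the same fundamental function. The paper uses $w_b(v)=\lceil-\log_2(\lVert\sum_iv_ib_i\rVert/\sum_i|v_i|)\rceil\in\N$ for all $v\in c_{00}(\Q)\setminus\{0\}$. The normalization is invariant under $\Aut(\A)$ and makes the weights nonnegative, as the cylinder count requires.

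\emph{Admissibility of $\pi$.} This is not obtained by letting each level have only the prescribed automorphisms; in the paper each level has many more, permuting the isolated tree vertices. The stage maps of a pro-isomorphism are surjections between different graphs. The paper marks the distinguished vertices by loops and uses their distinct first coordinates to make the induced $\pi$ independent of the level. The inverse composites make $\pi$ bijective, and a copy of $\C$ with $\Aut(\C)\cong\Aut(\A)$ on these vertices puts $\pi$ in $\Aut(\C)$.

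\emph{Conjugations.} The conjugations from Prze\'zdziecki's theorem are not absorbed in the limit. They are removed by its uniqueness clause, which turns commutation up to inner automorphisms into exact commutation of the graph maps. You also need that $Ff$ onto implies $f$ onto, to remain in $\mGraphEpi$.

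\emph{Analyticity.} You never show that $\sim_H$ is analytic, which is half of being universal analytic. The paper obtains it from a Borel map $\mathcal E_H\to\PC$ realizing the limits, together with the analyticity of $\naiso$.
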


The history of the problem involves two notions of universality. Friedman and Stanley showed that isomorphism of countable graphs and of countable groups is universal for isomorphism relations of countable structures \cite[Sections~1 and~2.3]{FS}. Their group construction uses Mekler's encoding by nilpotent groups of class two and fixed odd-prime exponent \cite[Section~2]{Mekler}. Louveau and Rosendal developed completeness methods for the larger class of analytic equivalence relations \cite{LR}. Ferenczi, Louveau, and Rosendal subsequently proved universality in this sense for permutative equivalence of unconditional basic sequences, uniform homeomorphism of complete separable metric spaces, and topological isomorphism of Polish groups, already on abelian groups \cite[Theorems~15 and~22 and Corollary~34]{FLR}.

Within the non-Archimedean class, local compactness leads to a more concrete classification. Kechris, Nies, and Tent proved that topological isomorphism of profinite groups and of locally compact non-Archimedean Polish groups is Borel bireducible with countable-graph isomorphism. Their general criterion gives the upper bounds; the lower bounds use discrete groups in the locally compact case and nilpotent profinite groups of class two and fixed odd-prime exponent in the profinite case \cite[Theorem~3.1, Section~4, and Theorem~4.3]{KNT}. Independently, Rosendal and Zielinski obtained the countable-structure upper bound for locally compact non-Archimedean Polish groups through compactifications \cite[Proposition~10]{RZ}. The exact complexity for arbitrary closed subgroups of $S_\infty$ remained open \cite[p.~1191]{KNT}.

Gao, Nies, and Paolini proved that procountable groups are not classifiable by countable structures. On $\R^\N$, define
\begin{equation}\label{eq:ell-infty-definition}
 x\mathrel{\ell_\infty}y
 \quad\Longleftrightarrow\quad
 \sup_{n\in\N}|x(n)-y(n)|<\infty
 \qquad(x,y\in\R^\N).
\end{equation}
They proved $\ell_\infty\leq_B\pciso$ and showed that $\pciso$ does not reduce to any orbit equivalence relation of a Borel action of a Polish group \cite[Theorem~1.2]{GNP}. For each odd prime $p$, their reduction takes values in procountable groups of nilpotency class two and exponent $p$ \cite[Theorem~2.4, Section~5, and Theorem~5.7]{GNP}. Its two stages pass through uniform homeomorphism of tree spaces and then encode these spaces by inverse systems of groups. They asked whether $\pciso$ is universal analytic \cite[Question~1.5]{GNP}. Theorem~\ref{thm:main} answers this question and determines the complexity of the full non-Archimedean class as well.

Gao, Nies, and Paolini's construction uses repeated Cantor blocks and a finite interval of target depths to recover bounded differences. Their spaces have a common homeomorphism type, and their inverse systems can be chosen with isomorphic stage groups \cite[Introduction and Sections~2.2--2.3]{GNP}. We retain these features while incorporating the permitted permutations of the weights. This is the additional information needed to pass from a bounded-difference lower bound to analytic universality.

Our source is permutative equivalence: two basic sequences are permutatively equivalent if a permutation of one makes the coefficientwise linear map extend to an isomorphism of their closed spans. Logarithms of normalized norms turn multiplicative comparison into bounded differences of integer weights. The allowed permutation must respect all rational linear combinations and remain the same at every inverse-system level. Section~\ref{sec:weights} encodes the rational linear structure in a fixed directed graph, and Section~\ref{sec:trees} places a copy of that graph on the distinguished vertices at every level.

The reconstruction theorem combines this control of permutations with the counting argument of \cite[Section~2.3]{GNP}. The two moduli of a uniform homeomorphism confine the images of a source block to a common finite interval of target depths. Counting prefixes recovers a bound on all weight differences. Conversely, a bounded change is implemented by shifting bits between a finite label and a binary tail. Theorem~\ref{thm:weight-reconstruction} identifies bounded equivalence with isomorphism of the inverse graph systems and with uniform or bi-Lipschitz equivalence respecting the graph.

The transfer to groups preserves compatibility across levels. Prze\'zdziecki's functor recovers a unique directed multigraph map from each nontrivial group homomorphism modulo inner automorphisms of the target \cite[Theorem~5.13]{Prz}. Section~\ref{sec:groups} uses this uniqueness to recover commuting stage maps and inverse identities. All stage graphs from level two onward are isomorphic to one fixed graph. Section~\ref{sec:borel} transports their bonding maps to its image group $H$ and proves Theorem~\ref{thm:fixed-group}; a Borel realization of the limits as closed permutation groups then gives Theorem~\ref{thm:main}.

\Needspace{12\baselineskip}
\section{Weights and their symmetries}\label{sec:weights}

\subsection{A universal bounded-weight relation}
We work over the real scalars. A basic sequence is a Schauder basis for its closed linear span. Equip $C([0,1],\R)$ with the supremum norm, and let $\B$ consist of the sequences $b=(b_i)$ such that $\norm{b_i}=1$ for every $i\in\N$ and
\begin{equation}\label{eq:suppression}
 \norm{\sum_{i\in F}a_i b_i}\leq\norm{\sum_i a_i b_i}
\end{equation}
for every finitely supported real sequence $(a_i)$ and finite $F\subseteq\N$. Taking $F$ to be a singleton proves linear independence. The coordinate projections are consequently well defined and contractive on the algebraic span. The initial-coordinate projections extend to its closure and converge to the identity by density. Thus $\B$ consists exactly of the normalized $1$-suppression-unconditional basic sequences in $C([0,1],\R)$. It is Borel: by continuity, rational coefficients suffice to test \eqref{eq:suppression}.

For $b,b'\in\B$, write $b\approx_\perm b'$ if there are $\sigma\in S_\infty$ and $K\geq1$ such that
\begin{equation}\label{eq:basis-equivalence}
 K^{-1}\norm{\sum_i a_i b_i}
 \leq\norm{\sum_i a_i b'_{\sigma(i)}}
 \leq K\norm{\sum_i a_i b_i}
\end{equation}
for every finitely supported real sequence $(a_i)$.

\begin{proposition}\label{prop:flr-source}
Permutative equivalence on $\B$ is universal analytic.
\end{proposition}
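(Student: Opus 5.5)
The plan is to derive the statement from Ferenczi, Louveau, and Rosendal \cite[Theorem~15]{FLR}, which asserts that permutative equivalence of basic sequences is universal analytic. I would first check exactly which parameter space their theorem uses. Their proof produces unconditional basic sequences, and I expect them to be normalized $1$-unconditional, or at least $C$-unconditional for a fixed constant. These sequences live in a fixed universal space coded as a standard Borel space.

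The work is then to move their reduction into $\B$ while keeping it Borel and preserving both $\approx_\perm$ and its negation. Let $x\mapsto u(x)$ be their Borel reduction of an arbitrary analytic equivalence relation $E$ to permutative equivalence. I would proceed in three steps.

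\textbf{Step 1 (embed in $C([0,1],\R)$).} Realize each $u(x)$ inside $C([0,1],\R)$ by a Borel choice of isometric embedding of its closed span. The standard approach is to code the span as a point of $\mathcal F(C([0,1],\R))$, or to use Banach--Mazur universality through the weak$^*$-dual-ball map. For span-coded sequences I would use a Borel selection of an isometric copy, via the Kuratowski--Ryll-Nardzewski selectors, and push the vectors forward. Since each embedding is an isometry, the ratios in \eqref{eq:basis-equivalence} do not change.

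\textbf{Step 2 (renorm to $1$-suppression unconditional).} Replace the norm by
\[
\norm{\textstyle\sum a_ib_i}' = \sup_{F}\norm{\textstyle\sum_{i\in F}a_ib_i}.
\]
If the original sequences are uniformly $C$-suppression unconditional, this new norm is $C$-equivalent to the old one. It is therefore $1$-suppression unconditional, and it changes permutative equivalence only up to constants, which \eqref{eq:basis-equivalence} absorbs. Next normalize, $b_i\mapsto b_i/\norm{b_i}$. For suppression-unconditional sequences this also preserves permutative equivalence, provided the norms of $b_i$ and of $b'_{\sigma(i)}$ are comparable. That is automatic if the source sequences are already seminormalized, which I expect FLR's are. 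After renorming I would re-embed isometrically as in Step 1.

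\textbf{Step 3 (Borelness).} Everything above is computed from countably many rational-coefficient norm evaluations, so each step is Borel.

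The main obstacle is Step 2 combined with the re-embedding. The renormed space is no longer presented as a subspace of the original ambient space, so it must be re-embedded isometrically into $C([0,1],\R)$ in a Borel way. My first attempt would be the explicit Banach--Mazur construction. Map the dual unit ball with its weak$^*$ topology, which depends in a Borel way on the rational norm data, onto a continuous image of the Cantor set. Then represent each vector as a continuous function on $[0,1]$ by piecewise-linear interpolation across the gaps of the Cantor set. This gives a formula in the coefficient norms that is visibly Borel. If FLR already state the result for normalized $1$-unconditional sequences in $C([0,1])$, then the proposition is a direct citation and the renorming step is unnecessary.
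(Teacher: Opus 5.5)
You follow the paper's overall strategy: import Ferenczi, Louveau, and Rosendal's Theorem~15 and transport their reduction into $\B$. The difference is that you build a general-purpose transport, while the paper reads the needed properties off the specific form of their reduction.

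By their Theorem~11(ii) and the proof of Theorem~15, every analytic equivalence relation reduces to families $(e_t)_{t\in S}$ of canonical vectors in one fixed space $\mathcal T_2$. These vectors are already normalized, and their finite-coordinate projections are contractive, so they have suppression constant one. Hence your Step~2 (renorming and renormalizing) is not needed.

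Because the ambient space never varies, a single fixed Banach--Mazur isometric embedding $\mathcal T_2\hookrightarrow C([0,1],\R)$, applied coordinatewise, is trivially Borel. Neither the Kuratowski--Ryll-Nardzewski selection nor the Borel-parametrized dual-ball construction that you flag as the main obstacle ever arises.

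Your route does buy robustness: it would handle any Borel reduction into seminormalized unconditional sequences living in varying spaces. Your renorming argument is sound. In fact it needs only each sequence's own finite suppression constant, not a uniform one, because the constant $K$ in \eqref{eq:basis-equivalence} may depend on the pair. The cost is a parametrized isometric-embedding result into $C([0,1],\R)$, which is standard but which you only sketch.

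Two routine points are handled in the paper but missing from your sketch.
\begin{enumerate}
\item Their reduction's output is indexed by trees $S$, so you need a Borel enumeration into $\N$-indexed sequences. The paper uses the order inherited from a fixed enumeration of the ambient nodes; permutative equivalence does not depend on this choice.
\item ``Universal analytic'' also requires showing that $\approx_\perm$ on $\B$ is analytic. This follows by quantifying over $\sigma\in S_\infty$, an integer $K$, and rational finitely supported coefficient sequences.
\end{enumerate}
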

\begin{proof}
Ferenczi, Louveau, and Rosendal proved that permutative equivalence of unconditional basic sequences is universal analytic \cite[Theorem~15]{FLR}. Their Theorem~11(ii) and the proof of Theorem~15 give, for every analytic equivalence relation, a Borel reduction to sequences $(e_t)_{t\in S}$ of canonical vectors in a fixed separable real Banach space $\mathcal T_2$. These vectors are normalized, and all finite-coordinate projections are contractive \cite[Section~6 and the proof of Theorem~15, p.~335]{FLR}. Thus their construction has suppression constant one.

Enumerate each input tree in the order inherited from a fixed enumeration of the ambient nodes. This is Borel, since identifying the $j$th selected node requires finitely many membership tests. The Banach--Mazur embedding theorem states that every separable real Banach space embeds linearly and isometrically into $C([0,1],\R)$ \cite[Section~3, p.~325]{FLR}. Fix such an embedding of $\mathcal T_2$. Applying it coordinatewise places the cited reductions in $\B$ and preserves permutative equivalence. Finally, $\approx_\perm$ is analytic: in \eqref{eq:basis-equivalence}, quantify over $\sigma\in S_\infty$ and an integer $K$, and test rational finitely supported sequences.
\end{proof}

Let $c_{00}(\Q)$ be the rational vector space of finitely supported rational sequences. Regard $\A=c_{00}(\Q)$ as a structure in the language of rational vector spaces expanded by two unary predicates, interpreted as the set $\{e_i:i\in\N\}$ of standard basis vectors and the nonzero set $V=\A\setminus\{0\}$, respectively. Every member of $\Aut(\A)$ is the rational-linear extension of a unique permutation of the standard basis. Define
\begin{equation}\label{eq:weight}
 w_b(v)=\left\lceil-\log_2\frac{\norm{\sum_i v_i b_i}}{\sum_i|v_i|}\right\rceil
 \qquad(v\in V).
\end{equation}
The ratio lies in $(0,1]$: linear independence makes the numerator positive, while the triangle inequality and normalization bound it by $\sum_i|v_i|$. Thus $w_b(v)\in\N$, and each coordinate in \eqref{eq:weight} is Borel in $b$.

For $w,w'\in\N^V$, set
\begin{equation}\label{eq:Ebd}
 w\mathrel{\Ebd}w'\quad\Longleftrightarrow\quad
 \exists\pi\in\Aut(\A)\ \exists C\in\N\ \forall v\in V\quad
 |w(v)-w'(\pi v)|\leq C.
\end{equation}
This is an equivalence relation: inverses preserve the same bound, and composition adds bounds. It is analytic, since $\Aut(\A)$ is a closed permutation group on the fixed countable set $\A$, and the quantified condition for a fixed $C$ is closed.

\begin{proposition}\label{prop:quantization}
The Borel map $b\mapsto w_b$ reduces $\approx_\perm$ to $\Ebd$. In particular, $\Ebd$ is universal analytic.
\end{proposition}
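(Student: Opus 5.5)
We must show that $b\approx_\perm b'$ holds iff $w_b\mathrel{\Ebd}w_{b'}$. Once this is done, universality of $\Ebd$ follows from Proposition~\ref{prop:flr-source}, since $\Ebd$ is analytic and $b\mapsto w_b$ is Borel coordinatewise. The key observation is that $w_b$ is a logarithmic quantization of the norm. Write $N_b(v)=\norm{\sum_i v_ib_i}$ and $s(v)=\sum_i|v_i|$. Then $2^{-w_b(v)}\le N_b(v)/s(v)<2^{-w_b(v)+1}$. Consequently, for $v,u\in V$ with $s(v)=s(u)$, the bound $|w_b(v)-w_{b'}(u)|\le C$ is equivalent, up to changing $C$ by one, to a two-sided multiplicative bound $2^{-C-1}\le N_{b'}(u)/N_b(v)\le 2^{C+1}$.

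For the forward direction, suppose $\sigma$ and $K$ witness \eqref{eq:basis-equivalence}. Let $\pi\in\Aut(\A)$ be the rational-linear extension of $e_i\mapsto e_{\sigma(i)}$, so $(\pi v)_{\sigma(i)}=v_i$. Then $\sum_j(\pi v)_jb'_j=\sum_iv_ib'_{\sigma(i)}$, which gives $K^{-1}N_b(v)\le N_{b'}(\pi v)\le KN_b(v)$. Since $s(\pi v)=s(v)$, the ratio inside the logarithm changes by a factor in $[K^{-1},K]$. Hence $|w_b(v)-w_{b'}(\pi v)|\le\lceil\log_2K\rceil+1$ for every $v\in V$.

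For the converse, suppose $\pi$ and $C$ witness $w_b\mathrel{\Ebd}w_{b'}$. Every automorphism of $\A$ is induced by a unique permutation $\sigma$ of the basis, and again $s(\pi v)=s(v)$. The quantization estimate then gives
\[
2^{-C-1}N_b(v)\le N_{b'}(\pi v)=\norm{\sum_iv_ib'_{\sigma(i)}}\le 2^{C+1}N_b(v)
\]
for all rational finitely supported $v\ne0$, and trivially for $v=0$. Both sides are continuous in the finitely many coefficients on a fixed finite support. By density of the rationals, the inequality extends to all real finitely supported sequences. This is \eqref{eq:basis-equivalence} with $K=2^{C+1}$.

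The main point requiring care is this last step. The weights are only defined on rational vectors, so we have to pass from rational vectors to real ones, and we have to confirm that the normalization $s(v)$ is permutation invariant. Once both are checked, no other step is delicate. In particular, the suppression-unconditionality assumption is needed only to place us in $\B$; it plays no role in the equivalence itself.
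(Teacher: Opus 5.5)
Your proof is correct and follows the paper's argument essentially step for step. The rational-linear extension of $\sigma$ preserves $\sum_i|v_i|$. The dyadic sandwich $2^{-w_b(v)}\le N_b(v)/s(v)<2^{1-w_b(v)}$ turns the norm bound into the weight bound $\lceil\log_2K\rceil+1$, and turns a weight bound $C$ back into the norm inequality with $K=2^{C+1}$ on rational coefficients, with continuity on finite-dimensional coordinate spaces handling real coefficients.
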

\begin{proof}
If \eqref{eq:basis-equivalence} holds, let $\pi$ be the rational-linear extension of $\sigma$. It preserves the denominator in \eqref{eq:weight}. Taking logarithms and ceilings gives
\begin{equation}\label{eq:quantization-bound}
 |w_b(v)-w_{b'}(\pi v)|\leq\lceil\log_2K\rceil+1
 \qquad(v\in V).
\end{equation}
Conversely, for $r>0$ and $q=\lceil-\log_2r\rceil$, we have
\begin{equation}\label{eq:dyadic}
 2^{-q}\leq r<2^{1-q}.
\end{equation}
If \eqref{eq:Ebd} holds with bound $C$, applying \eqref{eq:dyadic} to the two ratios in \eqref{eq:weight} gives \eqref{eq:basis-equivalence} for rational coefficients with $K=2^{C+1}$. Continuity on each finite-dimensional coordinate space gives the same inequalities for real coefficients. The induced linear map and its inverse therefore extend to the closed spans. Universality follows from that of $\approx_\perm$.
\end{proof}

\subsection{A fixed graph of the permitted permutations}
We next encode the rational structure in a fixed graph. This permits us to carry its symmetries through the later group construction.

\begin{lemma}\label{lem:fixed-graph}
There are a countable irreflexive directed graph $\C=(D,E)$, an $\Aut(\C)$-invariant subset $P\subseteq D$, a bijection $\iota:V\to P$, and an isomorphism $\Theta:\Aut(\C)\to\Aut(\A)$ such that
\begin{equation}\label{eq:fixed-graph-action}
 \gamma(\iota(v))=\iota(\Theta(\gamma)v)
 \qquad(\gamma\in\Aut(\C),\ v\in V).
\end{equation}
\end{lemma}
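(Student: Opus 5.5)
We encode the countable structure $\A$ as a directed graph by the standard coding of countable structures into graphs, arranged so that the vertex set contains a copy of $V$ that is distinguishable by first-order (indeed, local) graph features.

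\textbf{The graph.} Put $P=\iota(V)$, one vertex for each nonzero vector. The zero vector is not in $P$; it is determined as the additive identity, so it needs no vertex. For each relation we add gadget vertices:
\begin{itemize}
\item[(a)] For each triple $(u,v,s)\in V^3$ with $u+v=s$, a gadget vertex $x$ with edges $x\to\iota(u)$, $x\to\iota(v)$ and $x\to\iota(s)$. The three arguments are distinguished by attaching to $x$ short labelled paths of distinct lengths.
\item[(b)] For each $q\in\Q\setminus\{0\}$ and $v\in V$, a gadget encoding the relation $\iota(v)\mapsto\iota(qv)$. Here $q$ is coded by a rigid marker, for instance a path whose length is the index of $q$ in a fixed enumeration, attached at the gadget. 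Relations with $u+v=0$ are coded by a separate gadget type.
\item[(c)] The predicate $\{e_i\}$, coded by attaching a pendant marker to each $\iota(e_i)$.
\end{itemize}
All gadget types use marker lengths distinct from one another and from anything adjacent to $P$. The resulting graph $\C$ is countable and irreflexive.

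\textbf{Invariance of $P$ and the restriction map.} Vertices of $P$ are characterized by an isomorphism-invariant property, for example having no outgoing edges while having in-edges from an addition gadget. Hence $P$ is $\Aut(\C)$-invariant. For $\gamma\in\Aut(\C)$, the map $\iota^{-1}\gamma\iota$ is a bijection of $V$. Since $\gamma$ maps gadgets of each type to gadgets of the same type, preserving marker lengths and argument order, this bijection preserves addition, multiplication by each rational scalar, and the basis predicate. Extending it by $0\mapsto0$ gives an element $\Theta(\gamma)\in\Aut(\A)$, and \eqref{eq:fixed-graph-action} then holds by definition. The map $\Theta$ is a homomorphism.

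\textbf{Bijectivity of $\Theta$.} For surjectivity, an automorphism $\pi$ of $\A$ permutes the relation tuples, so it induces a permutation of the gadgets. Fixing markers, this extends uniquely to an automorphism of $\C$. For injectivity, suppose $\gamma$ fixes $P$ pointwise. Each gadget is uniquely determined by its attachment tuple in $P$ together with its type, because we add exactly one gadget per tuple. The marker paths are rigid given their endpoints. Hence $\gamma$ is the identity.

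\textbf{Main obstacle.} The delicate step is the rigidity check behind injectivity and invariance. We must make the gadgets asymmetric enough that no automorphism can swap argument positions, exchange gadget types, or map marker vertices into $P$. The first thing to try is giving every marker path a unique length in a fixed injective code, and making all marker tips distinguishable by degree from $P$-vertices, for example by attaching to each $P$-vertex a pendant 2-cycle-free fan of a fixed size. These invariant degree/length signatures then settle the remaining case checks.
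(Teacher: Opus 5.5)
Your overall strategy is the same as the paper's: code the relations of $\A$ by incidence gadgets attached to a copy of $V$, with rigid markers standing in for colors. Omitting the zero vector and adding a separate gadget for $u+v=0$ is harmless. However, the whole content of the lemma is the rigidity check, and you leave it as the ``main obstacle.'' Moreover, the part of the construction you do specify fails.

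In (a), the gadget vertex $x$ has edges directly to $\iota(u),\iota(v),\iota(s)$. Paths attached to $x$ itself cannot tell which out-neighbour occupies which argument slot. So for distinct $u,v$ with $u+v\neq0$, the gadgets of $(u,v,u+v)$ and $(v,u,u+v)$ have the same neighbourhood in $P$ and isomorphic markers. Exchanging them, together with their markers, is then a nontrivial automorphism of $\C$ fixing $P$ pointwise. Hence $\Theta$ is not injective, contrary to your claim that each gadget is determined by its attachment tuple. For the same reason, $\gamma$ only preserves unordered sum-triples, so your claim that $\iota^{-1}\gamma\iota$ is additive is not justified. The same issue must be addressed for the ordered pairs in (b) and for the $u+v=0$ gadget.

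Your invariance criterion for $P$ (``no outgoing edges and an in-edge from an addition gadget'') also has two problems. It presupposes that gadget vertices can already be recognized, which is exactly the unproved rigidity. It must also be reconciled with the markers and fans you attach at $P$-vertices.

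The paper supplies exactly the missing ingredients.
\begin{enumerate}
\item \emph{Slots cannot be permuted.} Each argument slot gets its own position vertex, $r_{R,\bar a}\to p_{R,\bar a,j}\to x_{a_j}$, with $p_{R,\bar a,j}$ colored by $(R,j)$.
\item \emph{Colors are cycles, not paths.} Each core vertex $x$ receives a fresh directed cycle of length $L_c\geq3$, with distinct lengths for distinct colors. The cycle is joined to $x$ by a single edge into a designated vertex $\rho_x$.
\item \emph{Core and markers are separated.} The core is acyclic and no edge returns from a cycle, so core vertices are exactly those lying on no directed cycle. Also, $\rho_x$ is the unique vertex of its cycle receiving an edge from outside.
\end{enumerate}
Consequently, every automorphism preserves the core, each attachment, and each color. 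So $P$ is invariant, and $\gamma$ restricts to a color-preserving core automorphism, which comes from a unique automorphism of $\A$. Conversely, every color-preserving core automorphism extends uniquely, because a directed cycle has no nontrivial automorphism fixing a vertex. This settles invariance, injectivity and surjectivity of $\Theta$ in a few lines.

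If you want to keep path markers, you need an equally explicit invariant separating core vertices from marker vertices, and you must actually carry out the case analysis. As written, the proposal does not prove the lemma.
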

\begin{proof}
Replace each positive-arity operation of $\A$ by its graph relation and the constant $0$ by the unary relation naming it; retain the two unary predicates. For every element $a$, introduce a vertex $x_a$. For each true tuple $R(\bar a)$ of arity $k$, introduce a tuple vertex $r_{R,\bar a}$ and position vertices $p_{R,\bar a,j}$, $j<k$, with edges
\begin{equation}\label{eq:incidence-edges}
 r_{R,\bar a}\longrightarrow p_{R,\bar a,j}\longrightarrow x_{a_j}.
\end{equation}
All introduced vertices are distinct. Color the element vertices according as $a=0$ or $a\in V$, the tuple vertices by $R$, and the position vertices by $(R,j)$, using disjoint colors for all these types. Call this colored graph the core. Its color-preserving automorphisms are exactly the unique extensions of automorphisms of $\A$.

Choose distinct integers $L_c\geq3$ for the colors. At each core vertex $x$ of color $c$, attach a fresh directed cycle $C_x$ of length $L_c$ by an edge from $x$ to a specified vertex $\rho_x\in C_x$. The resulting graph $\C$ is countable and irreflexive. The core is acyclic, and no edge returns to it from a cycle. Hence its vertices are exactly those lying on no directed cycle. Moreover, $\rho_x$ is the unique vertex of $C_x$ receiving an edge from outside the cycle. An automorphism therefore recovers every attachment and color. Conversely, a color-preserving core automorphism extends uniquely, since a directed cycle has no nontrivial automorphism fixing a vertex.

Set $P=\{x_v:v\in V\}$ and $\iota(v)=x_v$. The color of these vertices makes $P$ invariant. Restriction to element vertices defines $\Theta$; unique extension proves that it is an isomorphism and gives \eqref{eq:fixed-graph-action}.
\end{proof}

Fix a bijection $i\mapsto d_i$ from $\N$ onto $D$. For $w\in\N^V$, define its extension $\widetilde w:D\to\N$ by
\begin{equation}\label{eq:weight-extension}
 \widetilde w(d)=
 \begin{cases}w(\iota^{-1}(d)),&d\in P,\\0,&d\notin P,\end{cases}
 \qquad(d\in D).
\end{equation}
By Lemma~\ref{lem:fixed-graph}, the permitted permutations on $P$ are exactly those induced by $\Aut(\A)$. Since both extensions vanish off the invariant set $P$,
\begin{equation}\label{eq:graph-weights}
 w\mathrel{\Ebd}w'\quad\Longleftrightarrow\quad
 \exists\pi\in\Aut(\C)\ \exists C\in\N\ \forall d\in D\quad
 |\widetilde w(d)-\widetilde w'(\pi d)|\leq C
 \qquad(w,w'\in\N^V).
\end{equation}
Below, the tree construction is defined for any weight $a:D\to\N$; we apply it to $a=\widetilde w$.

\section{Recovering weights from inverse graph systems}\label{sec:trees}

The construction in this section applies to any fixed countably infinite irreflexive directed graph $\C=(D,E)$. We will use the graph from Lemma~\ref{lem:fixed-graph}. The tree records the size of each weight, while copies of $\C$ record the permitted permutations. The repeated blocks and the reconstruction estimate adapt \cite[Sections~2.2--2.3]{GNP} to this setting.

\subsection{The weighted tree}
A tree on $\N$ is a nonempty subset of $\N^{<\N}$ closed under initial segments. For a finite or infinite sequence $x$, write $x\restr n$ for its initial segment of length $n$; if $s$ is finite and $t$ is finite or infinite, then $s^\frown t$ denotes their concatenation. A tree is \emph{pruned} if every node extends to an infinite branch. Its branch space is
\begin{equation}\label{eq:branch-space}
 [T]=\{x\in\N^\N:x\restr n\in T\text{ for every }n\in\N\}.
\end{equation}
We equip $[T]$ with the complete ultrametric
\begin{equation}\label{eq:branch-metric}
 d_T(x,y)=2^{-\nu(x,y)},\qquad
 \nu(x,y)=\min\{j:x(j)\ne y(j)\}\quad(x\ne y).
\end{equation}
We put $d_T(x,x)=0$ and, for $s\in T$, denote the cylinder determined by $s$ by
\begin{equation}\label{eq:cylinder}
 N_s=\{x\in[T]:s\text{ is an initial segment of }x\}.
\end{equation}

Write $2\N=\{2k:k\in\N\}$. We call the elements of $(2\N)^\N$ the even branches. Start with the even tree $(2\N)^{<\N}$ and, for each $i\in\N$, distinguish the branch
\begin{equation}\label{eq:distinguished-branches}
 z_{d_i}=(2i,0,0,\ldots).
\end{equation}
When $d=d_i$, we also write this branch as $z_d$.
Given $a\in\N^D$, at each depth $r\geq2$ along $z_d$ attach the block $B_a(d,r)$ consisting of the branches satisfying
\begin{equation}\label{eq:block}
 x\restr r=z_d\restr r,\qquad
 x(r)=2q+1\quad(q\in\N,\ q<2^{a(d)}),\qquad
 x(r+s+1)\in\{1,3\}\quad(s\in\N).
\end{equation}
Let $T_a$ consist of $(2\N)^{<\N}$ and all finite initial segments of branches in these blocks. It is pruned: an all-even node has an all-even extension, while a node whose first odd coordinate has already appeared can be extended using the binary tail. Every branch of $T_a$ is either even or belongs to the unique block $B_a(d,r)$ determined by its first odd coordinate $r$. For fixed $q$, the corresponding set of branches is a compact open cylinder homeomorphic to $2^\N$. If $L>r$, there are $2^{a(d)}$ choices for $q$ and $2^{L-r-1}$ choices for the intervening tail, so $B_a(d,r)$ has
\begin{equation}\label{eq:capacity}
 2^{a(d)}2^{L-r-1}
\end{equation}
distinct prefixes of length $L$.

\begin{lemma}\label{lem:attached}
The union $Q_a$ of all attached blocks is exactly the set of points of $[T_a]$ that have a compact open neighborhood. Moreover,
\begin{equation}\label{eq:attached-boundary}
 \overline{Q_a}\setminus Q_a=\{z_d:d\in D\}.
\end{equation}
\end{lemma}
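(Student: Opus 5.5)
The plan is to prove both assertions by inspecting cylinders $N_s$ directly, using the explicit description \eqref{eq:block} of $T_a$ and the dichotomy already noted: every branch of $[T_a]$ is either even or lies in the unique block $B_a(d,r)$ determined by its first odd coordinate $r$.

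\emph{Block points have compact open neighborhoods.} Let $x\in B_a(d,r)$ and put $s=x\restr(r+1)$. A branch of $[T_a]$ extending $s$ has an odd coordinate at $r$, so it lies in $B_a(d,r)$ with the same value $q$. Hence $N_s$ is exactly the set of branches $s^\frown t$ with $t\in\{1,3\}^\N$. By \eqref{eq:branch-metric} this set is homeomorphic to $2^\N$, so it is compact. It is open because it is a cylinder.

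\emph{Even branches have no compact open neighborhood.} Let $y$ be an even branch. Every neighborhood of $y$ contains a basic cylinder $N_{y\restr n}$, so it suffices to show that no such cylinder is compact; a compact open neighborhood $U$ would contain some $N_{y\restr n}$, and this cylinder would be clopen, hence compact. Since $(2\N)^{<\N}\subseteq T_a$, the node $y\restr n$ has every $(y\restr n)^\frown\langle 2k\rangle$, $k\in\N$, as a child. The cylinders $N_{(y\restr n)^\frown\langle 2k\rangle}$ are nonempty, because the tree is pruned, and pairwise disjoint. They cover $N_{y\restr n}$ together with finitely many other cylinders $N_{(y\restr n)^\frown\langle m\rangle}$, $m$ odd. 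Every node has only finitely many odd children, since $q<2^{a(d)}$ at the branching depth and the tail only uses $\{1,3\}$. This infinite disjoint open cover has no finite subcover, so $N_{y\restr n}$ is not compact. This proves the first assertion, and in particular $Q_a$ is open.

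\emph{The boundary \eqref{eq:attached-boundary}.} Since $Q_a$ is open and its complement consists of the even branches, $\overline{Q_a}\setminus Q_a$ is the set of even branches in the closure of $Q_a$.

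First, each $z_d$ lies in the closure. For every $r\geq2$ the block $B_a(d,r)$ is nonempty (take $q=0$), and its points agree with $z_d$ on the first $r$ coordinates. They are therefore within $2^{-r}$ of $z_d$, and $z_d\notin Q_a$ since $z_d$ is even.

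Conversely, let $y=(2i,y(1),\ldots)$ be an even branch with $y\neq z_{d_i}$. Choose $j\geq1$ with $y(j)\neq0$; then $y(j)$ is even and nonzero. Suppose some $x\in B_a(d,r)$ lies in $N_{y\restr(j+1)}$. Then $x(0)=2i$ forces $d=d_i$. The coordinates of $x$ are $0$ at positions $1,\ldots,r-1$ and odd from position $r$ onward, so $x(j)$ is never a nonzero even number, contradicting $x(j)=y(j)$. Thus $N_{y\restr(j+1)}$ misses $Q_a$, and $y\notin\overline{Q_a}$.

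I expect the only delicate point to be the non-compactness step: one must use the infinite even branching from $(2\N)^{<\N}\subseteq T_a$ and check that the extra odd children are finite in number. This does not matter for the argument, since the even children alone already give an infinite disjoint family with no finite subcover.
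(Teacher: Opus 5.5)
Your proof is correct and follows essentially the same route as the paper. Block points have the compact open cylinder $N_{x\restr(r+1)}\cong 2^\N$; every cylinder around an even branch is non-compact because of the infinitely many even children (you use a partition into infinitely many nonempty open sets where the paper uses the uniformly separated branches $(y\restr n)^\frown(2k,0,0,\ldots)$); and block branches have only zero or odd coordinates beyond position $0$, so $N_{y\restr(j+1)}$ misses $Q_a$ whenever $y(j)\ne0$, which is the paper's case split $r\le j$ versus $r>j$ in compressed form.
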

\begin{proof}
Each fixed-$q$ attached cylinder is compact and open. If $x$ is an even branch, every cylinder $N_{x\restr n}$ contains the branches
\begin{equation}\label{eq:separated-even-branches}
 (x\restr n)^{\frown}(2k,0,0,\ldots)\qquad(k\in\N),
\end{equation}
which are pairwise at distance $2^{-n}$; hence no such cylinder is compact. Any compact open neighborhood of $x$ would contain such a cylinder as a closed subset, a contradiction.

Every neighborhood of $z_d$ meets blocks attached farther along it. Now let $x$ be an even branch different from all $z_d$, and let $j\geq1$ be the first index with $x(j)\ne0$. For $e\in D$ and $r\geq2$, a branch in $B_a(e,r)$ has an odd coordinate at $r$. If $r\leq j$, that coordinate is incompatible with $x(r)$; if $r>j$, its coordinate at $j$ is zero. Thus $N_{x\restr(j+1)}$ misses $Q_a$, proving \eqref{eq:attached-boundary}.
\end{proof}

\subsection{The inverse graph system}
We now organize the levels of $T_a$ into an inverse system. All systems below are indexed by $\N_{>0}$. For a category $\mathsf K$, an inverse system $X$ consists of objects $X_n$ and bonding maps $p^X_{m,n}:X_m\to X_n$ for $m\geq n$, with the usual identity and composition laws. A \emph{pre-morphism} from $X$ to $Y$ is a strictly increasing (hence cofinal) map $\phi:\N_{>0}\to\N_{>0}$ and maps $f_n:X_{\phi(n)}\to Y_n$ such that
\begin{equation}\label{eq:premorphism}
 p^Y_{m,n}\circ f_m=f_n\circ p^X_{\phi(m),\phi(n)}
 \qquad(m\geq n\geq1).
\end{equation}
Two pre-morphisms $(\phi,f)$ and $(\phi',f')$ are equivalent if, for each $n\geq1$, there is $r\geq\phi(n),\phi'(n)$ such that
\begin{equation}\label{eq:premorphism-equivalence}
 f_n\circ p^X_{r,\phi(n)}=f'_n\circ p^X_{r,\phi'(n)}.
\end{equation}
Their equivalence classes are the morphisms in $\Pro(\mathsf K)$. The composite of $(\phi,f):X\to Y$ and $(\psi,g):Y\to Z$ has index $\phi\circ\psi$ and stage maps $g_n\circ f_{\psi(n)}$. A merely increasing cofinal index map can be replaced by a strictly increasing majorant, composing the stage maps with bonding maps. This gives an equivalent pre-morphism and agrees with \cite[Definition~3.5 and Remark~3.6]{GNP}.

We write $\GrpEpi$ for countable groups and surjective homomorphisms. A directed multigraph, or $m$-graph, consists of a vertex set, an edge set, and source and target maps from edges to vertices; both sets are countable when the $m$-graph is countable. An $m$-graph map is a pair of maps preserving source and target. Let $\mGraph$ be this category, as in \cite[Section~2]{Prz}, and let $\mGraphEpi$ have its countable objects and the maps surjective on vertices and edges. Equivalently, $\mGraph$ is the category of set-valued diagrams of shape $\mathsf E\rightrightarrows\mathsf V$, where $\mathsf E$ and $\mathsf V$ denote the edge and vertex sorts. Its epimorphisms are therefore precisely the maps surjective on both components. We regard an ordinary directed graph, possibly with loops, as an $m$-graph with one edge for each related pair.

\begin{lemma}\label{lem:identity}
Let $X$ be an inverse system in a category $\mathsf K$ whose bonding maps are epimorphisms. A self-pre-morphism $(\phi,f)$ represents the identity if and only if $f_n=p^X_{\phi(n),n}$ for every $n$.
\end{lemma}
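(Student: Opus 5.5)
The identity morphism of $X$ in $\Pro(\mathsf K)$ is represented by the pre-morphism $(\mathrm{id}_{\N_{>0}},\mathrm{id}_{X_n})$. By \eqref{eq:premorphism-equivalence}, a self-pre-morphism $(\phi,f)$ represents the identity exactly when, for each $n$, there is $r\geq\phi(n),n$ with
\begin{equation*}
 f_n\circ p^X_{r,\phi(n)}=p^X_{r,n}.
\end{equation*}
Here $\phi(n)\geq n$ automatically, since $\phi$ is strictly increasing on $\N_{>0}$, so $r\geq\phi(n)$ suffices. I will prove both directions from this reformulation, using only the composition law for bonding maps and right cancellation of epimorphisms.

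For the ``if'' direction, suppose $f_n=p^X_{\phi(n),n}$ for every $n$. Take $r=\phi(n)$. Then $f_n\circ p^X_{\phi(n),\phi(n)}=p^X_{\phi(n),n}$, which is the required identity. Before concluding, I will check that this family is a pre-morphism at all, by verifying \eqref{eq:premorphism}. For $m\geq n$, both $p^X_{m,n}\circ p^X_{\phi(m),m}$ and $p^X_{\phi(m),n}\circ p^X_{\phi(m),\phi(n)}$ equal $p^X_{\phi(m),n}$ by the composition law.

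For the ``only if'' direction, fix $n$ and choose $r\geq\phi(n)$ as in the reformulation. The composition law gives $p^X_{r,n}=p^X_{\phi(n),n}\circ p^X_{r,\phi(n)}$. Hence
\begin{equation*}
 f_n\circ p^X_{r,\phi(n)}=p^X_{\phi(n),n}\circ p^X_{r,\phi(n)}.
\end{equation*}
The bonding map $p^X_{r,\phi(n)}$ is an epimorphism. It is a composite of the epimorphic bonding maps, or an identity if $r=\phi(n)$, and in either case it is an epimorphism. Cancelling it on the right yields $f_n=p^X_{\phi(n),n}$.

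The only step that needs care is bookkeeping rather than a real obstacle. One must confirm that the definition of equivalence is applied with the identity pre-morphism's index map $\phi'=\mathrm{id}$, so that the relevant bonding map on that side is $p^X_{r,n}$. One must also confirm that the epimorphism hypothesis is exactly what licenses the cancellation. In the categories used later, $\GrpEpi$ and $\mGraphEpi$, epimorphisms are surjections, as noted above for $\mGraph$, so the cancellation is concrete there as well.
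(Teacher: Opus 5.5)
Your proof is correct and follows the paper's argument: you specialize the equivalence condition to the identity representative $(\mathrm{id},\mathrm{id})$, factor $p^X_{r,n}=p^X_{\phi(n),n}\circ p^X_{r,\phi(n)}$, and cancel the epimorphism $p^X_{r,\phi(n)}$ on the right, with the converse obtained by taking $r=\phi(n)$. Your extra check that $(\phi,(p^X_{\phi(n),n})_n)$ satisfies the pre-morphism identity is harmless but not needed, since the statement already assumes $(\phi,f)$ is a pre-morphism.
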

\begin{proof}
Strict increase on $\N_{>0}$ implies $\phi(n)\geq n$. Equality with the identity gives, for each $n$, some $m\geq\phi(n)$ with
\begin{equation}\label{eq:identity-cancellation}
 f_n\circ p^X_{m,\phi(n)}=p^X_{m,n}
 =p^X_{\phi(n),n}\circ p^X_{m,\phi(n)}.
\end{equation}
Cancel the epimorphism on the right. The converse follows directly from the definition.
\end{proof}

For $n\geq1$, let $\Gamma_n(a)$ have vertex set $T_a\cap\N^n$. The distinguished vertices $z_d\restr n$, $d\in D$, are pairwise distinct because their first coordinates are distinct. Put a loop at each such vertex, and copy the edges of $\C$ between these vertices:
\begin{equation}\label{eq:stage-edges}
 z_d\restr n\longrightarrow z_e\restr n\quad\Longleftrightarrow\quad dEe
 \qquad(d,e\in D,\ d\ne e).
\end{equation}
There are no further edges. For $m\geq n$, define $p^a_{m,n}$ on vertices by $p^a_{m,n}(s)=s\restr n$; send each loop, and each copied edge corresponding to $(d,e)$, to the edge with the same label at level $n$. These maps are surjective on vertices because $T_a$ is pruned, and on edges because every loop or copied edge has a lift at every higher level. This defines
\begin{equation}\label{eq:graph-system}
 \Gamma(a)=(\Gamma_n(a),p^a_{m,n})_{m\geq n\geq1}
\end{equation}
as an object of $\Pro(\mGraphEpi)$.

A \emph{uniform homeomorphism} is a bijection whose forward and inverse maps are uniformly continuous.
A homeomorphism $h$ between metric spaces $(X,d_X)$ and $(Y,d_Y)$ is $L$-\emph{bi-Lipschitz}, for $L\geq1$, if
\begin{equation}\label{eq:bilipschitz-definition}
 L^{-1}d_X(x,y)\leq d_Y(h(x),h(y))\leq Ld_X(x,y)
 \qquad(x,y\in X).
\end{equation}
The following theorem gives the precise information retained by the inverse systems.

\begin{theorem}\label{thm:weight-reconstruction}
For $a,a'\in\N^D$, the following conditions are equivalent.
\begin{enumerate}
\item There are $\pi\in\Aut(\C)$ and $M\in\N$ such that
\begin{equation}\label{eq:weight-reconstruction-bound}
 |a(d)-a'(\pi d)|\leq M\qquad(d\in D).
\end{equation}
\item $\Gamma(a)\cong\Gamma(a')$ in $\Pro(\mGraphEpi)$.
\item There are $\pi\in\Aut(\C)$ and a uniform homeomorphism $h:[T_a]\to[T_{a'}]$ satisfying
\begin{equation}\label{eq:weight-reconstruction-action}
 h(z_d)=z_{\pi d}\qquad(d\in D).
\end{equation}
\item There are $\pi\in\Aut(\C)$ and a bi-Lipschitz homeomorphism $h:[T_a]\to[T_{a'}]$ satisfying \eqref{eq:weight-reconstruction-action}.
\end{enumerate}
If $M$ witnesses \textup{(1)}, the homeomorphism in \textup{(4)} can be chosen $2^M$-bi-Lipschitz.
\end{theorem}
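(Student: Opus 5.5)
We prove the cycle of implications (1)$\Rightarrow$(4)$\Rightarrow$(3)$\Rightarrow$(1), together with (1)$\Rightarrow$(2)$\Rightarrow$(3).

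\textbf{(1)$\Rightarrow$(4).} Given $\pi$ and $M$, we build $h$ blockwise. On even branches we permute the first coordinate, $h(2i,x(1),\ldots)=(2j,x(1),\ldots)$ where $d_j=\pi d_i$, and fix all later coordinates. This sends $z_d$ to $z_{\pi d}$ and maps each block $B_a(d,r)$ onto the ``slot'' of $B_{a'}(\pi d,r)$. Inside a block, write $k=a(d)$ and $k'=a'(\pi d)$. If $k\geq k'$, split the label $q<2^k$ into a label $q'<2^{k'}$ and $k-k'$ extra bits, and push those bits into the front of the binary tail, re-encoded by $\{1,3\}$. If $k<k'$, pull $k'-k$ tail bits into the label instead. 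This is a bijection between the blocks. It preserves the common prefix up to depth $r$ and shifts tail positions by at most $M$. A case check on $\nu(x,y)$ then gives $2^{-M}d\leq d(hx,hy)\leq 2^Md$: the case of same versus different blocks is free, and within a block the first disagreement index moves by at most $|k-k'|\leq M$. Note that $r\geq2$ keeps the first coordinate intact, so $\pi$ acts consistently.

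\textbf{(4)$\Rightarrow$(3)} is trivial.

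\textbf{(1)$\Rightarrow$(2).} We define stage maps $f_n:\Gamma_{n+M+1}(a)\to\Gamma_n(a')$ by truncating $h$. Since $h$ is $2^M$-bi-Lipschitz, $h(x)\restr n$ depends only on $x\restr(n+M)$. On the distinguished vertices this map sends $z_d\restr m\mapsto z_{\pi d}\restr n$, and loops and copied edges go to the edges with the same label. Edges are preserved because $\pi\in\Aut(\C)$. We need to check that no other vertex is sent onto a distinguished vertex in a way that requires an edge; this holds since the only edges live on distinguished vertices. The inverse $h^{-1}$ gives maps in the other direction. The composites are $h^{-1}h$ truncated, which equal the bonding maps, so Lemma~\ref{lem:identity} shows they represent identities.

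\textbf{(2)$\Rightarrow$(3).} An isomorphism in $\Pro(\mGraphEpi)$ induces a map on inverse limits. On vertices the limit of $\Gamma_n(a)$ is $[T_a]$, because the tree is pruned. A pre-morphism $f$ with index $\phi$ gives $h(x)\restr n=f_n(x\restr\phi(n))$, which is uniformly continuous by construction. The inverse morphism gives the inverse map, and the composites are the identity by Lemma~\ref{lem:identity}. The edge data forces the following. Loops sit exactly at the vertices $z_d\restr n$, so each $f_n$ maps looped vertices to looped vertices compatibly, and the limit of these is a bijection $\pi$ of $D$ with $h(z_d)=z_{\pi d}$. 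Copied edges $z_d\to z_e$ must go to edges between the images, which are non-loop copied edges whenever the images differ. Since the images are distinct at large levels, $\pi$ preserves $E$. Applying the same argument to the inverse shows that $\pi$ reflects $E$, so $\pi\in\Aut(\C)$.

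\textbf{(3)$\Rightarrow$(1), the main obstacle.} Here we adapt the counting argument of \cite[Section~2.3]{GNP}. Homeomorphisms preserve the compact-open locus, so by Lemma~\ref{lem:attached} $h(Q_a)=Q_{a'}$. Let $\delta,\delta'$ be moduli of uniform continuity for $h$ and $h^{-1}$, expressed as integers $u,u'$: agreement to depth $n+u$ forces image agreement to depth $n$, and similarly for $h^{-1}$ with $u'$. Fix $d$ and a large depth $r$. Since $h(z_d)=z_{\pi d}$, points of $B_a(d,r)$ agree with $z_d$ to depth $r$ and disagree from depth $r+1$ on. Their images therefore agree with $z_{\pi d}$ to depth roughly $r-u$ and, using $h^{-1}$, disagree by depth about $r+u'$. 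Hence they lie in blocks $B_{a'}(\pi d,r')$ with $r'$ in the fixed window $[r-u,r+u']$. The images are compact open, so they are finite unions of such blocks or parts of them.

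Now count prefixes at a common depth $L\gg r$, using \eqref{eq:capacity} and the fact that $h$ and $h^{-1}$ change prefix length by at most $u+u'$. First, the $2^{a(d)}2^{L-r-1}$ prefixes of $B_a(d,r)$ inject, after truncating further by $u$, into the prefixes of the window blocks. This gives $2^{a(d)-r}\lesssim 2^{u}\sum_{|r'-r|\leq u+u'}2^{a'(\pi d)-r'}$, hence $a(d)\leq a'(\pi d)+C$ with $C$ depending only on $u,u'$. The symmetric argument with $h^{-1}$ gives the reverse bound. The delicate bookkeeping is to use the disjointness of the blocks at different depths $r$, so that the images of a whole range of source blocks cover the target window without overlap. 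This yields a uniform $M$, independent of $d$, exactly as in GNP, with $M$ on the order of $2(u+u')+2$.
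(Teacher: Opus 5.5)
Your arguments for (1)$\Rightarrow$(4), (1)$\Rightarrow$(2) and (2)$\Rightarrow$(3) are essentially the paper's: re-cutting label bits and tail, truncating $h$ to stage maps, and using loops to force $\pi\in\Aut(\C)$.

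The gap is in (3)$\Rightarrow$(1). You encode the moduli of $h$ and $h^{-1}$ as constant shifts $u,u'$, so that agreement to depth $n+u$ forces image agreement to depth $n$ for \emph{every} $n$. For the metric $2^{-\nu}$ this is exactly $d(hx,hy)\leq 2^u d(x,y)$, i.e.\ Lipschitz continuity. Uniform continuity only provides some function $\Omega_h$, which may grow arbitrarily fast. So the following are justified only when $h$ is bi-Lipschitz: your window $[r-u,r+u']$, the claim that $h$ and $h^{-1}$ change prefix lengths by at most $u+u'$, and the constant $M\approx 2(u+u')+2$. As written, you have proved (4)$\Rightarrow$(1), not (3)$\Rightarrow$(1).

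This breaks the cycle. The map you build in (2)$\Rightarrow$(3) has modulus controlled by the arbitrary index map $\phi$ and need not be Lipschitz, so neither (2) nor (3) is shown to imply (1). Relatedly, you fix $d$ before choosing $r$. With a genuine modulus the window depends on $r$, so $r$ must be chosen independently of $d$ to get a uniform bound.

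The repair, as in the paper, is to use the moduli at only two fixed values, with one source depth for all $d$:
\begin{enumerate}
\item Fix $m\geq2$ and put $n=\max\{2,\Omega_h(m)\}$ and $L=\Omega_{h^{-1}}(n+1)$.
\item For each $d$, pick one branch $x_q$ in each of the $2^{a(d)}$ label cylinders of $B_a(d,n)$.
\item Each image $h(x_q)$ agrees with $z_{\pi d}$ on the first $m$ coordinates and lies in $Q_{a'}$. Hence it lies in a block along $z_{\pi d}$ at some depth $p_q\geq m$.
\item If $p_q\geq L$, then $x_q$ and $z_d$ would share their $(n+1)$-prefix, contradicting the odd value $x_q(n)$. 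So $m\leq p_q<L$.
\item The images have pairwise distinct $L$-prefixes. Counting with \eqref{eq:capacity} gives $2^{a(d)}<2^{a'(\pi d)+L-m}$, where $L-m$ does not depend on $d$.
\item The symmetric argument with $h^{-1}$ gives the other bound.
\end{enumerate}
No covering of the target window by a range of source blocks is needed; a single block per $d$ suffices.
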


We prove the theorem in the next two subsections. The main estimate turns uniform continuity in both directions into a bound common to all weights.

\subsection{The reconstruction estimate}

\begin{lemma}\label{lem:uniform}
For $a,a'\in\N^D$, if $\Gamma(a)\cong\Gamma(a')$ in $\Pro(\mGraphEpi)$, there are $\pi\in\Aut(\C)$ and a uniform homeomorphism $h:[T_a]\to[T_{a'}]$ with $h(z_d)=z_{\pi d}$ for every $d\in D$.
\end{lemma}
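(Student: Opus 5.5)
Take an isomorphism in $\Pro(\mGraphEpi)$, represented by pre-morphisms $(\phi,f):\Gamma(a)\to\Gamma(a')$ and $(\psi,g):\Gamma(a')\to\Gamma(a)$ whose composites represent the identities. By Lemma~\ref{lem:identity}, applied in $\mGraphEpi$ where all bonding maps are epimorphisms, we have $f_n\circ g_{\phi(n)}=p^{a'}_{\psi(\phi(n)),n}$ and $g_n\circ f_{\psi(n)}=p^{a}_{\phi(\psi(n)),n}$ on vertices and on edges. The plan is to read off the branch maps and the graph automorphism separately from these stage maps.

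\emph{The homeomorphism.} On vertices, $f_n$ maps $T_a\cap\N^{\phi(n)}$ to $T_{a'}\cap\N^n$, compatibly with restriction by \eqref{eq:premorphism}. For $x\in[T_a]$, define $h(x)$ as the branch whose $n$-th restriction is $f_n(x\restr\phi(n))$. Compatibility makes this a branch of $T_{a'}$. Define $k:[T_{a'}]\to[T_a]$ from $g$ in the same way. The identity relations above give $h\circ k=\mathrm{id}$ and $k\circ h=\mathrm{id}$. Uniform continuity follows from the definition: $h(x)\restr n$ depends only on $x\restr\phi(n)$, so $d(x,y)\le2^{-\phi(n)}$ implies $d(h x,h y)\le 2^{-n}$. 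The same holds for $k$. Thus $h$ is a uniform homeomorphism.

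\emph{The permutation.} This is the step needing care. Edges of $\Gamma_m(a)$ are the loops at the distinguished vertices and the copied edges. An $m$-graph map sends loops to loops, since the source of an edge equals its target exactly when the image's does. In $\Gamma_n(a')$ the only loops are at the vertices $z_e\restr n$, and each copied edge has distinct endpoints. Hence $f_n$ sends each $z_d\restr\phi(n)$ to some $z_{e}\restr n$, because the loop there must go to a loop.

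First I will show that the label $e=\pi_n(d)$ is independent of $n$, using \eqref{eq:premorphism} and the fact that $p^{a'}_{m,n}$ preserves labels. Write $\pi=\pi_n$. Then $h(z_d)$ restricts to $z_{\pi d}\restr n$ for all $n$, so $h(z_d)=z_{\pi d}$. Applying the same argument to $g$ gives $\rho$ with $k(z_e)=z_{\rho e}$, and the inverse identities give $\rho=\pi^{-1}$. So $\pi$ is a bijection of $D$.

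Next I will show that $\pi$ preserves $E$. Let $dEe$ with $d\ne e$. The copied edge $z_d\restr\phi(n)\to z_e\restr\phi(n)$ must go under $f_n$ to an edge from $z_{\pi d}\restr n$ to $z_{\pi e}\restr n$. These endpoints are distinct, and the only non-loop edges between distinguished vertices are copied edges, so $\pi(d)E\pi(e)$. Applying the same argument to $\pi^{-1}$ gives the reverse implication, and therefore $\pi\in\Aut(\C)$.

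The main obstacle is the loop argument: it must be checked that no vertex outside $\{z_e\restr n\}$ carries a loop in $\Gamma_n(a')$, and that $m$-graph maps cannot send a loop to a non-loop. Both follow from the construction \eqref{eq:stage-edges}, since the only edges are the loops and the copied edges. Alternatively, Lemma~\ref{lem:attached} characterizes $\{z_d\}$ topologically and would give $h(\{z_d\})=\{z_e\}$ directly, but the edge data is still needed to identify $\pi$ as a graph automorphism.
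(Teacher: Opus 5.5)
Your proof is correct and follows the paper's argument essentially step for step. Loops force distinguished vertices to distinguished vertices, the pre-morphism identity makes the label map independent of $n$, and the inverse identities from Lemma~\ref{lem:identity} make it a bijection. Distinctness of $z_{\pi d}\restr n$ and $z_{\pi e}\restr n$ then forces copied edges onto copied edges, and the prefix-wise definition gives the uniform homeomorphism; you simply do the homeomorphism first.

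One small wording point: an $m$-graph map sends loops to loops, but a non-loop edge can collapse to a loop, so ``exactly when'' is too strong. You only use the forward implication, so nothing breaks.
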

\begin{proof}
Choose inverse pre-morphisms with stage maps
\begin{equation}\label{eq:graph-premorphisms}
 f_n:\Gamma_{\phi(n)}(a)\twoheadrightarrow\Gamma_n(a'),\qquad
 g_n:\Gamma_{\psi(n)}(a')\twoheadrightarrow\Gamma_n(a)
 \qquad(n\geq1).
\end{equation}
Only the distinguished vertices carry loops, and an $m$-graph map sends a loop to a loop. Hence
\begin{equation}\label{eq:distinguished-stage-action}
 f_n(z_d\restr\phi(n))=z_{\pi_n(d)}\restr n
 \qquad(d\in D)
\end{equation}
for some map $\pi_n:D\to D$. If $m\geq n$, the pre-morphism identity evaluated at $z_d\restr\phi(m)$ gives
\begin{equation}\label{eq:distinguished-stage-coherence}
 z_{\pi_m(d)}\restr n=z_{\pi_n(d)}\restr n.
\end{equation}
Since $n\geq1$ and distinguished branches have distinct first coordinates, $\pi_m(d)=\pi_n(d)$. Write the common map as $\pi$; similarly, the maps $g_n$ determine a map $\tau:D\to D$. By Lemma~\ref{lem:identity},
\begin{equation}\label{eq:pro-composites}
 g_n\circ f_{\psi(n)}=p^a_{\phi(\psi(n)),n},\qquad
 f_n\circ g_{\phi(n)}=p^{a'}_{\psi(\phi(n)),n}
 \qquad(n\geq1).
\end{equation}
Evaluating \eqref{eq:pro-composites} on distinguished vertices gives $\tau\circ\pi=\pi\circ\tau=\mathrm{id}_D$. Thus $\pi$ is bijective and $\tau=\pi^{-1}$. A copied edge from $d$ to $e$ with $d\ne e$ cannot be sent to a loop, because $\pi(d)\ne\pi(e)$. Hence both $\pi$ and its inverse preserve $E$, so $\pi\in\Aut(\C)$.

Define $h$ by
\begin{equation}\label{eq:h-prefix}
 h(x)\restr n=f_n(x\restr\phi(n))
 \qquad(x\in[T_a],\ n\geq1).
\end{equation}
The pre-morphism identity makes these prefixes coherent, defining a unique branch $h(x)\in[T_{a'}]$. Equation~\eqref{eq:h-prefix} proves uniform continuity. Define $k:[T_{a'}]\to[T_a]$ analogously from $(\psi,g)$. For each $x\in[T_a]$ and $n\geq1$, \eqref{eq:pro-composites} gives
\begin{equation}\label{eq:branch-inverse}
 k(h(x))\restr n
 =g_n\bigl(f_{\psi(n)}(x\restr\phi(\psi(n)))\bigr)
 =x\restr n.
\end{equation}
The other composite satisfies the corresponding identity. Thus $k=h^{-1}$, and both maps are uniformly continuous. The action on distinguished branches follows from the definition of $\pi$.
\end{proof}

For trees $T,T'$, a uniformly continuous map $u:[T]\to[T']$, and $k\in\N$, define
\begin{equation}\label{eq:modulus-definition}
 \Omega_u(k)=\min\bigl\{\ell\in\N:
 \forall x,y\in[T]\ \bigl(x\restr\ell=y\restr\ell
 \Longrightarrow u(x)\restr k=u(y)\restr k\bigr)\bigr\}.
\end{equation}
The set in \eqref{eq:modulus-definition} is nonempty by uniform continuity.

\begin{proposition}\label{prop:window}
Let $a,a'\in\N^D$ and let $\pi:D\to D$ be a bijection. If $h:[T_a]\to[T_{a'}]$ is a uniform homeomorphism satisfying $h(z_d)=z_{\pi d}$ for every $d\in D$, then
\begin{equation}\label{eq:bounded-weights}
 \sup_{d\in D}|a(d)-a'(\pi d)|<\infty.
\end{equation}
\end{proposition}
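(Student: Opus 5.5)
The plan is to turn uniform continuity of $h$ and $h^{-1}$ into a window of target depths that does not depend on $d$, and then to compare prefix counts using the capacity formula \eqref{eq:capacity}. Write $u=h$ and $v=h^{-1}$, with moduli $\Omega_u$ and $\Omega_v$ as in \eqref{eq:modulus-definition}. By Lemma~\ref{lem:attached}, $Q_a$ consists exactly of the points with a compact open neighborhood, and this is a topological property. Hence $h(Q_a)=Q_{a'}$, so every image of an attached branch lies in some block $B_{a'}(e,r')$. I will prove $a(d)\leq a'(\pi d)+C$ with $C$ independent of $d$. The reverse inequality follows by applying the same argument to $h^{-1}$ and $\pi^{-1}$.

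\emph{Step 1 (a uniform window).} Fix $r=\max(2,\Omega_u(2))$; this depends only on $h$, not on $d$. Let $x\in B_a(d,r)$. Then $x\restr r=z_d\restr r$, so $h(x)\restr 2=z_{\pi d}\restr 2$. Suppose $h(x)\in B_{a'}(e,r')$. Agreement in the first coordinate gives $e=\pi d$. Agreement at coordinate $1$ with $0$ gives $r'\geq2$ (which is automatic anyway).

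For the upper bound, note that $x(r)$ is odd while $z_d(r)=0$, so $x\restr(r+1)\neq z_d\restr(r+1)$. Put $R=\Omega_v(r+1)$. Then $h(x)\restr R\neq z_{\pi d}\restr R$. Since $h(x)$ agrees with $z_{\pi d}$ below its first odd coordinate $r'$, this forces $r'<R$. Thus
\[
h(B_a(d,r))\subseteq\bigcup_{2\leq r'<R}B_{a'}(\pi d,r'),
\]
and $R$ is independent of $d$. This uniformity is the key point, and I expect it to be the main obstacle. It is exactly where the two moduli of $h$ and $h^{-1}$ must be combined with the fact that $h$ fixes the distinguished branches up to $\pi$.

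\emph{Step 2 (counting).} Take $L=r+1$ and $L''=\Omega_v(L)$, both independent of $d$. If $x,y\in B_a(d,r)$ have distinct prefixes of length $L$, then $h(x)$ and $h(y)$ have distinct prefixes of length $L''$, by the definition of $\Omega_v$. So $h$ injects the set of length-$L$ prefixes of $B_a(d,r)$ into the set of length-$L''$ prefixes of the union above. By \eqref{eq:capacity} the former has exactly $2^{a(d)}$ elements. For the latter, each block $B_{a'}(\pi d,r')$ has at most $2^{a'(\pi d)}2^{\max(L''-r'-1,0)}\leq 2^{a'(\pi d)}2^{L''}$ prefixes of length $L''$. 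This holds even when $L''\leq r'$, since then all its branches share a single prefix of that length. Hence
\[
2^{a(d)}\leq R\,2^{L''}\,2^{a'(\pi d)},
\]
so $a(d)-a'(\pi d)\leq\log_2R+L''$ for every $d$.

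\emph{Step 3 (symmetry).} Run Steps 1 and 2 with $h^{-1}$, $a'$, $a$ and $\pi^{-1}$; this is legitimate because $h^{-1}(z_{\pi d})=z_d$. This bounds $a'(\pi d)-a(d)$ uniformly in $d$, and together with Step 2 gives \eqref{eq:bounded-weights}.
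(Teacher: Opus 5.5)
Your proof is correct and follows the paper's argument essentially step for step. The paper also fixes a single source depth $n=\max\{2,\Omega_h(m)\}$ (you take $m=2$), confines all target attachment depths below $L=\Omega_{h^{-1}}(n+1)$ (your $R$, which coincides with your $L''$), counts distinct prefixes via \eqref{eq:capacity}, and symmetrizes with $h^{-1}$ and $\pi^{-1}$; the only difference is that you bound the target count crudely by $R\,2^{L''}2^{a'(\pi d)}$ instead of summing the geometric series, which does not matter.
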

\begin{proof}
The property of having a compact open neighborhood is preserved by homeomorphisms, so Lemma~\ref{lem:attached} gives $h(Q_a)=Q_{a'}$. Fix $m\geq2$ and put
\begin{equation}\label{eq:moduli}
 n=\max\{2,\Omega_h(m)\},\qquad L=\Omega_{h^{-1}}(n+1).
\end{equation}
For each $d$, choose one branch $x_q$ in each of the $2^{a(d)}$ fixed-$q$ cylinders of $B_a(d,n)$. Since $x_q$ and $z_d$ agree on their first $n$ coordinates, $h(x_q)$ and $z_{\pi d}$ agree on their first $m$ coordinates. Also $h(x_q)\in Q_{a'}$, so it lies in a unique block attached along $z_{\pi d}$, at some depth $p_q\geq m$.

If $p_q\geq L$, the image would agree with $z_{\pi d}$ on the first $L$ coordinates. By \eqref{eq:moduli}, $x_q$ and $z_d$ would then have the same $(n+1)$-prefix, contradicting the odd value of $x_q(n)$. Hence $m\leq p_q<L$, and in particular $L>m$. Further, the $L$-prefixes of the images are distinct: equality of two such prefixes would force equality of the source $(n+1)$-prefixes.

All $2^{a(d)}$ distinct prefixes therefore lie in the blocks whose attachment depths belong to $[m,L)$. Formula~\eqref{eq:capacity} gives
\begin{equation}\label{eq:window}
 2^{a(d)}\leq\sum_{p=m}^{L-1}2^{a'(\pi d)}2^{L-p-1}
 =2^{a'(\pi d)}(2^{L-m}-1)<2^{a'(\pi d)+L-m}.
\end{equation}
Thus $a(d)-a'(\pi d)<L-m$, uniformly in $d$. Applying the same argument to $h^{-1}$ and $\pi^{-1}$ gives a constant $C'$ such that $a'(e)-a(\pi^{-1}e)\leq C'$ for every $e\in D$. Substituting $e=\pi d$ gives the opposite bound.
\end{proof}

Only one source depth is used in this estimate. The moduli of $h$ and $h^{-1}$ confine its images, for every $d$, to the same finite interval of target depths. Counting the available prefixes converts these two moduli into a common additive bound on the weights. No individual attachment depth needs to be preserved.

\subsection{The converse}
\begin{proposition}\label{prop:recut}
Let $a,a'\in\N^D$. Suppose $C\in\N$, $\pi\in\Aut(\C)$, and $|a(d)-a'(\pi d)|\leq C$ for all $d\in D$. There is a $2^C$-bi-Lipschitz homeomorphism $h:[T_a]\to[T_{a'}]$ with $h(z_d)=z_{\pi d}$. It induces inverse pre-morphisms, giving $\Gamma(a)\cong\Gamma(a')$ in $\Pro(\mGraphEpi)$.
\end{proposition}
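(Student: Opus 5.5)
The plan is to build $h$ explicitly: a coordinate-permuting isometry on the even part, and a ``recutting'' of each block $B_a(d,r)$ onto the block $B_{a'}(\pi d,r)$ at the \emph{same} depth. The graph systems are then read off from $h$ by taking prefixes.

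\emph{Even branches.} Write $\bar\pi$ for the permutation of $\N$ with $d_{\bar\pi(i)}=\pi(d_i)$. For $x\in[T_a]$ with $x(0)=2i$, the map $h$ replaces $x(0)$ by $2\bar\pi(i)$ and, on even branches, leaves every later coordinate unchanged. This is an isometry of $(2\N)^\N$, and it sends $z_d$ to $z_{\pi d}$. It also sends $z_d\restr r$ to $z_{\pi d}\restr r$ for every $r$, which is what lets the block at depth $r$ along $z_d$ be matched with the block at depth $r$ along $z_{\pi d}$.

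\emph{Blocks.} A branch of $B_a(d,r)$ is $z_d\restr r^\frown(2q+1)^\frown t$ with $q<2^{a(d)}$ and $t\in\{1,3\}^\N$. Read $q$ as a binary word of length $a(d)$ and $t$ as a binary sequence; their concatenation is an element of $2^\N$. Recut this sequence so that its first $a'(\pi d)$ bits form the label $q'$ and the remaining bits form the tail $t'$. If $k=a'(\pi d)-a(d)\ge0$, this moves the first $k$ tail bits into the label; if $k<0$, it moves the last $|k|$ label bits to the front of the tail. In either case $|k|\le C$. The image is $z_{\pi d}\restr r^\frown(2q'+1)^\frown t'$. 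This gives a bijection $B_a(d,r)\to B_{a'}(\pi d,r)$, and together with the even part a bijection $h:[T_a]\to[T_{a'}]$. The inverse is the analogous recut for $\pi^{-1}$.

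\emph{Bi-Lipschitz bound.} This bookkeeping is the main obstacle, and I will check it case by case.
\begin{itemize}
\item If $x,y$ are both even, or first differ at an index $\le r$ where one has its first odd coordinate at $r$, then $h$ preserves agreement and disagreement at every index up to that point. The first coordinate is changed bijectively, coordinates before the first odd one are unchanged, and an odd value at $r$ stays odd. So $\nu$ is unchanged.
\item If $x,y$ lie in the same block $B_a(d,r)$, the first disagreement occurs either in the label bits, which all sit at index $r$, or at tail index $r+s+1$. After recutting, the corresponding bit sits either at index $r$ or at a tail index shifted by at most $C$, and never earlier than $r$.
\end{itemize}
Hence $|\nu(h x,h y)-\nu(x,y)|\le C$, which is exactly $2^C$-bi-Lipschitz. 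The same estimate applies to $h^{-1}$.

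\emph{Pro-isomorphism.} The estimate gives $\Omega_h(n)\le n+C$ and $\Omega_{h^{-1}}(n)\le n+C$. Set $\phi(n)=\psi(n)=n+C$.
\begin{itemize}
\item On vertices, define $f_n(s)=h(x)\restr n$ for any $x\in N_s$. This is well defined by the modulus bound, and it is surjective because $h$ is onto and $T_{a'}$ is pruned.
\item Since $f_n(z_d\restr(n+C))=z_{\pi d}\restr n$, the loop at $z_d\restr(n+C)$ goes to the loop at $z_{\pi d}\restr n$.
\item The copied edge labelled $(d,e)$ goes to the edge labelled $(\pi d,\pi e)$, which exists because $\pi\in\Aut(\C)$. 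Surjectivity on edges follows from bijectivity of $\pi$.
\end{itemize}
The pre-morphism identity \eqref{eq:premorphism} holds because both sides compute $h(x)\restr n$. Define $g_n$ from $h^{-1}$ in the same way. Then $g_n\circ f_{n+C}(s)=x\restr n=p^a_{n+2C,n}(s)$ on vertices, and the labelled edges return to themselves. By Lemma~\ref{lem:identity} the composite represents the identity, and symmetrically for the other composite. This yields $\Gamma(a)\cong\Gamma(a')$ in $\Pro(\mGraphEpi)$.
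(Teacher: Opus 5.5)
Your proposal is correct and follows the paper's proof essentially step for step: the same first-coordinate relabeling on even branches, the same binary recut of $B_a(d,r)$ onto $B_{a'}(\pi d,r)$ at the same depth, the same estimate $|\nu(h(x),h(y))-\nu(x,y)|\le C$, and the same pre-morphisms with $\phi(n)=\psi(n)=n+C$ checked through Lemma~\ref{lem:identity}. The paper only makes your same-block step more explicit: it writes the disagreement depth as a function of the first differing bit and checks the mixed case, where that bit passes between label and tail, showing the shift there is still at most $|a(d)-a'(\pi d)|$.
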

\begin{proof}
Write $\pi(d_i)=d_{\widehat\pi(i)}$. Then $\widehat\pi\in S_\infty$. On the even branches, define
\begin{equation}\label{eq:base-relabeling}
 (2i,2u_1,2u_2,\ldots)\longmapsto
 (2\widehat\pi(i),2u_1,2u_2,\ldots).
\end{equation}
This sends $z_d$ to $z_{\pi d}$. In the block $B_a(d,r)$, put $j=a(d)$ and $k=a'(\pi d)$. We use an empty binary word when $j=0$ or $k=0$, and encode the tail values $1$ and $3$ by $0$ and $1$, respectively. Express the cylinder label $q<2^j$ as a binary word of length $j$, including leading zeros, and append the encoded tail. This gives an infinite binary sequence. Read its first $k$ bits as a new cylinder label and the remainder as the tail in $B_{a'}(\pi d,r)$. Reversing this operation recovers the source branch, so these maps together define a bijection $h$.

For two distinct branches in the same block, let $t\in\N$ be the index of their first differing bit in the binary sequence above, with bits indexed from $0$. In the $j$-cut representation their first disagreement is at coordinate
\begin{equation}\label{eq:first-disagreement}
 F_{j,r}(t)=
 \begin{cases}
 r,&t<j,\\
 r+1+t-j,&t\geq j.
 \end{cases}
\end{equation}
For all $j,k,r,t\in\N$,
\begin{equation}\label{eq:cut-depth-bound}
 |F_{j,r}(t)-F_{k,r}(t)|\leq |j-k|.
\end{equation}
Indeed, suppose $j\leq k$. If $t<j$, the difference is zero; if $j\leq t<k$, it is $1+t-j\leq k-j$; and if $t\geq k$, it is $k-j$. The case $k\leq j$ is symmetric.

Outside a common block, the first disagreement is unchanged. Indeed, different first coordinates remain different; even branches retain their later coordinates; branches attached at different depths disagree at the smaller depth; and an even branch and an attached branch disagree either at the attachment depth or earlier. Together with \eqref{eq:cut-depth-bound}, this proves
\begin{equation}\label{eq:bilipschitz-depth}
 |\nu(h(x),h(y))-\nu(x,y)|\leq C
 \qquad(x,y\in[T_a],\ x\ne y).
\end{equation}
In particular, $h$ is $2^C$-bi-Lipschitz.

Set $\phi(n)=\psi(n)=n+C$. By \eqref{eq:bilipschitz-depth}, agreement on the first $n+C$ coordinates implies agreement of the images on the first $n$ coordinates, for both $h$ and $h^{-1}$. For $s\in T_a\cap\N^{n+C}$, define
\begin{equation}\label{eq:stage-vertex-map}
 f_n(s)=h(x)\restr n\quad\text{for any branch }x\text{ extending }s.
\end{equation}
Thus \eqref{eq:stage-vertex-map} is well defined, and surjectivity of $h$ makes $f_n$ surjective on vertices. On distinguished vertices it acts by $\pi$. Send the loop labeled by $d$ to the loop labeled by $\pi d$, and the copied edge labeled by $(d,e)$ to that labeled by $(\pi d,\pi e)$. This edge map is surjective because $\pi$ is an automorphism. Prefix compatibility gives \eqref{eq:premorphism}. The inverse map gives a pre-morphism $(\psi,g)$ with $g_n:\Gamma_{n+C}(a')\to\Gamma_n(a)$. On vertices, evaluating on any extending branch and using $h^{-1}\circ h=\mathrm{id}$ and $h\circ h^{-1}=\mathrm{id}$ gives
\begin{equation}\label{eq:recut-composites}
 g_n\circ f_{n+C}=p^a_{n+2C,n},\qquad
 f_n\circ g_{n+C}=p^{a'}_{n+2C,n}.
\end{equation}
The same equalities hold on loops and copied edges because their labels are carried successively by $\pi$ and $\pi^{-1}$. Hence these are equalities of $m$-graph maps, and Lemma~\ref{lem:identity} shows that the two pre-morphisms are inverse.
\end{proof}

\begin{proof}[Proof of Theorem~\ref{thm:weight-reconstruction}]
Proposition~\ref{prop:recut} gives \textup{(1)}$\Rightarrow$\textup{(2)} and \textup{(1)}$\Rightarrow$\textup{(4)}, including the stated constant. Lemma~\ref{lem:uniform} gives \textup{(2)}$\Rightarrow$\textup{(3)}, and \textup{(4)}$\Rightarrow$\textup{(3)} is immediate. Proposition~\ref{prop:window} gives \textup{(3)}$\Rightarrow$\textup{(1)}.
\end{proof}

\begin{remark}\label{rem:common-topology}
All spaces $[T_a]$ have the same homeomorphism type, even when every even branch is fixed. To see this, partition each space into the clopen sets
\begin{equation}\label{eq:first-coordinate-components}
 U_i^a=\{x\in[T_a]:x(0)=2i\}\qquad(i\in\N).
\end{equation}
For arbitrary $a,a'$, apply the binary recoding above on $U_i^a$ with $\pi=\mathrm{id}$, leaving its even branches fixed. The resulting map $U_i^a\to U_i^{a'}$ is $2^{|a(d_i)-a'(d_i)|}$-bi-Lipschitz. These maps combine to a homeomorphism because the sets in \eqref{eq:first-coordinate-components} form a disjoint open cover. Propositions~\ref{prop:window} and~\ref{prop:recut} show that a uniform homeomorphism fixing every $z_d$ exists exactly when these weight differences are uniformly bounded. Thus the distinction detected by the theorem lies within a single homeomorphism type: a separate bound on each component suffices for continuity, whereas uniform continuity imposes one bound on all components.

For example, take $a(d_i)=0$ and $a'(d_i)=i$. The spaces are homeomorphic but not uniformly homeomorphic. By Lemma~\ref{lem:attached}, any homeomorphism permutes the distinguished branches. For every such permutation $\pi$, the values $a'(\pi d)$ remain unbounded, so Proposition~\ref{prop:window} rules out a uniform homeomorphism.
\end{remark}

Applying Theorem~\ref{thm:weight-reconstruction} to \eqref{eq:graph-weights} gives the classification needed below.
\begin{corollary}\label{thm:graph-reconstruction}
For $w,w'\in\N^V$,
\begin{equation}\label{eq:graph-classification}
 w\mathrel{\Ebd}w'\quad\Longleftrightarrow\quad
 \Gamma(\widetilde w)\cong\Gamma(\widetilde w')
 \quad\text{in }\Pro(\mGraphEpi).
\end{equation}
\end{corollary}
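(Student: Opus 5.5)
The plan is to chain the equivalence \eqref{eq:graph-weights} with the equivalence \textup{(1)}$\Leftrightarrow$\textup{(2)} of Theorem~\ref{thm:weight-reconstruction}, applied to the weights $a=\widetilde w$ and $a'=\widetilde w'$ from \eqref{eq:weight-extension}. The corollary is then a direct combination of earlier results, with only a little bookkeeping to match the two statements.

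First, fix $w,w'\in\N^V$. By \eqref{eq:graph-weights}, $w\mathrel{\Ebd}w'$ holds exactly when there are $\pi\in\Aut(\C)$ and $C\in\N$ with $|\widetilde w(d)-\widetilde w'(\pi d)|\leq C$ for all $d\in D$. This is literally condition \textup{(1)} of Theorem~\ref{thm:weight-reconstruction} for $a=\widetilde w$ and $a'=\widetilde w'$, with $M=C$. Second, Theorem~\ref{thm:weight-reconstruction} says \textup{(1)} is equivalent to \textup{(2)}, that is, to $\Gamma(\widetilde w)\cong\Gamma(\widetilde w')$ in $\Pro(\mGraphEpi)$. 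Composing the two equivalences gives \eqref{eq:graph-classification}.

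The only points to check are the hypotheses behind these two ingredients. Theorem~\ref{thm:weight-reconstruction} was stated for a fixed countably infinite irreflexive directed graph, and here we need the graph $\C$ of Lemma~\ref{lem:fixed-graph}. That graph is irreflexive by construction, and it is infinite because $V$ is infinite and $\iota$ is a bijection onto $P\subseteq D$. The identity \eqref{eq:graph-weights} in turn depends on three facts: $P$ is $\Aut(\C)$-invariant, $\Theta$ is an isomorphism onto $\Aut(\A)$, and both extensions vanish off $P$. Because of these, any $\pi\in\Aut(\C)$ satisfies $|\widetilde w(d)-\widetilde w'(\pi d)|=0$ for $d\notin P$. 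On $P$ the bound becomes the bound in \eqref{eq:Ebd} for $\Theta(\pi)$, via \eqref{eq:fixed-graph-action}. All of this was recorded before the corollary, so there is no real obstacle. The one mild point is making sure the automorphism group quantified in Theorem~\ref{thm:weight-reconstruction} is $\Aut(\C)$ for this particular $\C$, which holds because the tree construction was carried out for that graph.
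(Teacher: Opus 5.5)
Your proposal is correct and matches the paper's argument: the paper also derives the corollary by applying the equivalence \textup{(1)}$\Leftrightarrow$\textup{(2)} of Theorem~\ref{thm:weight-reconstruction} to $a=\widetilde w$, $a'=\widetilde w'$ through \eqref{eq:graph-weights}. Your checks of the hypotheses ($\C$ countably infinite and irreflexive, $P$ invariant, both extensions vanishing off $P$) are the right ones.
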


\subsection{One graph at every stage}
We equip the weight space $\N^D$ with its product topology. For a countable $m$-graph $\Lambda$, write $V(\Lambda)$ and $E(\Lambda)$ for its vertex and edge sets. Give $\End_{\mGraph}(\Lambda)$ the pointwise-convergence topology inherited from
\begin{equation}\label{eq:graph-endomorphism-topology}
 V(\Lambda)^{V(\Lambda)}\times E(\Lambda)^{E(\Lambda)},
\end{equation}
with both sets discrete. Let $\Epi_{\mGraph}(\Lambda)$ denote the subspace of endomorphisms surjective on vertices and edges.

\begin{lemma}\label{lem:fixed-stage-graph}
Put $\Delta=\Gamma_2(0)$, where $0$ is the zero weight. For every $a\in\N^D$ and $n\geq2$, there is an isomorphism $\theta_n^a:\Gamma_n(a)\to\Delta$ preserving all distinguished vertex and edge labels. These isomorphisms can be chosen so that each map
\begin{equation}\label{eq:normalized-graph-maps}
 a\longmapsto q_k^a
 :=\theta_{k+1}^a\circ p^a_{k+2,k+1}\circ(\theta_{k+2}^a)^{-1}
 \quad(k\geq1)
\end{equation}
is continuous from $\N^D$ to $\Epi_{\mGraph}(\Delta)$.
\end{lemma}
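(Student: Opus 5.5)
We build the isomorphisms $\theta^a_n$ level by level, making $\theta^a_n$ on each vertex depend only on finitely many values of $a$. Then each $q^a_k$ is continuous: each vertex value depends on finitely many weights, and edge values are constant.

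\textbf{Shape of $\Gamma_n(a)$ for $n\geq2$.} The vertices are $T_a\cap\N^n$. The only labelled structure is the loops and copied edges on the distinguished vertices $z_d\restr n$; the other vertices carry no edges. So $\Gamma_n(a)$ is determined up to label-preserving isomorphism by its vertex set, modulo the fixed set of distinguished vertices. The label-preserving isomorphism must send $z_d\restr n$ to $z_d\restr 2$ and send loops and copied edges by label. The rest is any bijection between the sets of non-distinguished vertices. Both sets are countably infinite: already $T_a\cap\N^2\supseteq(2\N)^2$ contains infinitely many non-distinguished nodes $(2i,2u)$ with $u>0$. So any bijection works, and the whole difficulty is choosing bijections that vary continuously with $a$.

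\textbf{Choosing $\theta^a_n$ locally in $a$.} Partition the non-distinguished nodes of $T_a\cap\N^n$ by first coordinate. A node $s$ with $s(0)=2i$ lies in $T_a$ according to a condition involving only $a(d_i)$, because the blocks along $z_{d_i}$ are the only blocks with first coordinate $2i$. (For $n\geq 3$ a node with an odd coordinate $x(r)=2q+1$ at $r\geq2$ needs $q<2^{a(d_i)}$.) Let $W^a_{n,i}$ be the set of non-distinguished nodes with first coordinate $2i$. It is a countably infinite set: it contains all $(2i,2u,\dots)$ with $u>0$. It is also a decidable subset of $\N^n$ given $a(d_i)$. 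Define $\theta^a_n$ on $W^a_{n,i}$ as the order-isomorphism, for a fixed enumeration of $\N^{<\N}$, onto $W^0_{2,i}$, the non-distinguished nodes of $\Delta$ with first coordinate $2i$. Then $\theta^a_n(s)$ depends only on $a(d_i)$ and $s$, and $(\theta^a_n)^{-1}(t)$ for $t\in W^0_{2,i}$ also depends only on $a(d_i)$. On distinguished vertices and on all edges, $\theta^a_n$ is the label map, independent of $a$. This gives label-preserving isomorphisms, because each piece is a bijection between countably infinite sets and the pieces with fixed $i$ match.

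\textbf{Continuity and surjectivity.} Fix $k\geq1$ and a vertex $t\in\Delta$ with $t(0)=2i$. Compute $q^a_k(t)$ by applying $(\theta^a_{k+2})^{-1}$, restricting to length $k+1$, and applying $\theta^a_{k+1}$. Each step depends only on $a(d_i)$, since restriction preserves the first coordinate and sends distinguished nodes to distinguished nodes. The map on distinguished vertices and edges is constant. So each coordinate of $a\mapsto q^a_k$ in \eqref{eq:graph-endomorphism-topology} is locally constant on $\N^D$, hence continuous. Since $q^a_k$ is conjugate to the surjective bonding map $p^a_{k+2,k+1}$, it lies in $\Epi_{\mGraph}(\Delta)$.

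The main obstacle is the infinitude step: $W^a_{n,i}$ must be infinite for every $a$ and $n\geq2$, including $a(d_i)=0$. This is exactly why level $1$ is excluded. At level $1$ the only node with first coordinate $2i$ is $(2i)$ itself, which is distinguished, so $W_{1,i}$ is empty.
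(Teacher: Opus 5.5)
Your proof is correct and follows essentially the paper's route: each $\theta^a_n$ preserves labels on the distinguished copy of $\C$ and matches the isolated vertices of $\Gamma_n(a)$ with those of $\Delta$ by the order-isomorphism induced by a fixed enumeration, and continuity holds because each evaluation depends locally on only finitely many weights. Matching within each first-coordinate class separately is a harmless refinement: each vertex value of $q^a_k$ then depends only on $a(d_i)$, so continuity is immediate, whereas the paper's single global matching needs the extra remark that finding the $j$th isolated vertex uses only finitely many clopen membership tests.
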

\begin{proof}
Attachments first appear at length three, so $\Gamma_2(a)=\Delta$ for every $a$. For $n\geq2$, the graph $\Gamma_n(a)$ consists of the distinguished copy of $\C$, with its loops, and countably infinitely many isolated vertices. The latter already include the even nodes $(0,2k,0,\ldots,0)$ for $k\geq1$. Fix an enumeration of $\N^n$ for each $n$. Map the $j$th isolated vertex of $\Gamma_n(a)$, in the induced order, to the $j$th isolated vertex of $\Delta$, and preserve the distinguished labels. This defines $\theta_n^a$.

For a fixed finite sequence, membership in $T_a$ depends on at most one coordinate of $a$ and is clopen. Selecting the $j$th isolated vertex, or determining a given isolated vertex's rank, uses finitely many such tests. Consequently, the evaluation maps
\begin{equation}\label{eq:normalization-evaluation}
 (a,s)\longmapsto\theta_n^a(s),\qquad
 (a,v)\longmapsto(\theta_n^a)^{-1}(v)
\end{equation}
are continuous on $\{(a,s):s\in T_a\cap\N^n\}$ and $\N^D\times V(\Delta)$, respectively; all vertex sets carry the discrete topology. Composing these maps with the prefix map proves continuity of every vertex evaluation in \eqref{eq:normalized-graph-maps}. The edge maps preserve the fixed labels. Surjectivity follows from that of the prefix maps.
\end{proof}

\section{From graph systems to topological groups}\label{sec:groups}

We transfer the reconstruction theorem in two steps. Prze\'zdziecki's functor preserves surjective maps and recovers graph maps uniquely from group homomorphisms modulo inner conjugacy. This recovers the inverse systems. Continuity then recovers their stage maps from an arbitrary isomorphism of the inverse-limit groups.

\subsection{The almost-full functor}
Let $\mathsf{Grp}$ denote the category of groups and homomorphisms. We use the following result of Prze\'zdziecki \cite[Theorem~5.13]{Prz}.

\begin{theorem}\label{thm:almost-full}
There is a faithful functor $F:\mGraph\to\mathsf{Grp}$ for which, for every pair of $m$-graphs $\Gamma,\Delta$, the natural map
\begin{equation}\label{eq:almost-full}
 \Hom_{\mGraph}(\Gamma,\Delta)\sqcup\{*\}
 \ \,\longrightarrow\ \,
 \Hom_{\mathsf{Grp}}(F\Gamma,F\Delta)/\Inn(F\Delta)
\end{equation}
is a bijection. It sends an $m$-graph map $f$ to the class of $Ff$ and $*$ to the class of the trivial homomorphism; $\Inn(F\Delta)$ acts by postcomposition.
\end{theorem}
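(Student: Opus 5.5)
This statement is quoted from Prze\'zdziecki rather than proved here, so the plan is to invoke \cite[Theorem~5.13]{Prz} directly. The work is in checking that his hypotheses and conventions match the ones used in this paper. After that I would only sketch the mechanism of his construction, to make the later uses of uniqueness and surjectivity transparent.

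First, the conventions. I would confirm that Prze\'zdziecki's category of $m$-graphs is the one defined in Section~\ref{sec:trees}: objects are vertex and edge sets with source and target maps, and morphisms are pairs of maps preserving source and target. I would also confirm that his ``almost full'' statement is formulated as a bijection onto $\Hom(F\Gamma,F\Delta)/\Inn(F\Delta)$. Two points need care in that formulation: the trivial homomorphism is adjoined as the extra point $*$, and inner automorphisms act by postcomposition. Faithfulness then follows from injectivity of \eqref{eq:almost-full}. Distinct graph maps $f\neq f'$ give non-conjugate homomorphisms $Ff,Ff'$, and in particular unequal ones.

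Second, the sketch of the construction. $F$ is built as an amalgamated free product (or graph of groups) from a few fixed ``rigid'' countable groups. Each vertex receives a copy of one rigid group $A$. Each edge receives a copy of a group $B$, glued to the copies attached to its source and target along prescribed subgroups that distinguish the two ends. A graph map then induces a homomorphism by permuting these factors, which gives functoriality.

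Third, the converse direction, which is the hard part. Here one shows that every nontrivial homomorphism $F\Gamma\to F\Delta$ is, after an inner automorphism, of the form $Ff$ for a unique $f$. The argument combines rigidity of the building blocks with normal-form or Kurosh-type subgroup theory in the amalgam. The rigidity needed is that $\End(A)$ and $\Hom(A,B)$ are only trivial maps or conjugates of the standard inclusions, with trivial centralizers. Kurosh-type theory forces the image of each vertex group into a conjugate of a single vertex group. Consistency across edges then pins down the conjugating element uniformly.

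I expect this last step to be the main obstacle, and it is exactly what \cite{Prz} carries out. I would rely on the citation rather than reproduce it. Later in the paper I would only use two facts. The first is uniqueness of the graph map modulo $\Inn$. The second is that $F$ sends maps surjective on vertices and edges to surjective homomorphisms, which I would check from the factor-permuting description of $Ff$.
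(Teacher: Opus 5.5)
Your plan matches the paper's treatment: the paper states this result as \cite[Theorem~5.13]{Prz} and gives no proof. Your remarks on the conventions are also correct: the trivial homomorphism is the extra point $*$, $\Inn(F\Delta)$ acts by postcomposition, and faithfulness follows from injectivity of \eqref{eq:almost-full}.

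Your sketch of the mechanism, however, leaves out a feature that the statement needs. In Prze\'zdziecki's diagram $G\Gamma$ there is one global vertex group $M$, and every $P_{0,v}$ is attached to it through $N_v$. (Each edge $e:v\to w$ is also a chain $P_{1,e},\dots,P_{4,e}$ rather than a single group $B$.)

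Without $M$, the functor you describe violates the theorem.
\begin{itemize}
\item For $\Gamma=\varnothing$ it gives $F\varnothing=1$. The empty map and $*$ then both go to the trivial class, so \eqref{eq:almost-full} is not injective.
\item Take $\Gamma$ to be two isolated vertices $u,v$ and $\Delta$ a single vertex. Then $F\Gamma=A_u*A_v$ and $F\Delta=A$. Consider the homomorphism that is the identity on $A_u$ and $c_a$ on $A_v$ with $a\ne1$, or the one that is the identity on $A_u$ and trivial on $A_v$. Neither has the form $c_b\circ Ff$ once $A$ is centerless and nontrivial.
\end{itemize}
Your step ``consistency across edges pins down the conjugating element uniformly'' works only inside a connected component. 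Nothing synchronizes conjugators between components that share no edge.

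The group $M=F\varnothing$ makes the diagram connected for every $\Gamma$ and forces a single conjugator. The paper also relies on it later:
\begin{itemize}
\item stage maps are nontrivial because their targets contain $M$;
\item both sides of \eqref{eq:expanded-group-commutation} are nontrivial because, before conjugation, they restrict to the identity on $M$;
\item Lemma~\ref{lem:surjections} uses that the image of $Ff$ contains $M$.
\end{itemize}
Since the argument you actually propose is the citation, it stands. But the sketch should either include $M$ or be dropped, because as written it describes a functor for which the statement is false.
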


Consequently, every nontrivial homomorphism $F\Gamma\to F\Delta$ has the form $c_a\circ Ff$, where $c_a(x)=axa^{-1}$ and the $m$-graph map $f$ is unique.

Fix Prze\'zdziecki's specific functor. For an $m$-graph $\Gamma$, the group $F\Gamma$ is the colimit of a diagram $G\Gamma$. This diagram has one global vertex group $M$; for each vertex $v$ of $\Gamma$, a vertex group $P_{0,v}$ joined to $M$ through an edge group $N_v$; and, for each edge $e:v\to w$, vertex groups $P_{1,e},\ldots,P_{4,e}$ and edge groups $N_{0,e},\ldots,N_{4,e}$ arranged as
\begin{equation}\label{eq:diagram-architecture}
 P_{0,v}-P_{1,e}-P_{2,e}-P_{3,e}-P_{4,e}-P_{0,w},
\end{equation}
with successive edge groups $N_{0,e},\ldots,N_{4,e}$. All group types and incidence monomorphisms are fixed finite data \cite[Section~4]{Prz}. The vertex groups embed in $F\Gamma$ \cite[Lemma~5.2]{Prz} and together generate it \cite[proof of Lemma~5.10]{Prz}. In particular, $M=F\varnothing$ is a nontrivial subgroup of every $F\Gamma$, and $F\Gamma$ is countable whenever $\Gamma$ is countable.

\begin{lemma}\label{lem:surjections}
An $m$-graph map $f:\Gamma\to\Delta$ is surjective on vertices and edges if and only if $Ff$ is a surjective group homomorphism.
\end{lemma}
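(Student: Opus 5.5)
For the forward direction, suppose $f$ is surjective on vertices and edges. Since $F\Delta$ is generated by the images of its vertex groups (\cite[proof of Lemma~5.10]{Prz}), it suffices to show that every vertex group of $G\Delta$ lies in the image of $Ff$. I will use the explicit form of $Ff$ on the diagram $G\Gamma\to G\Delta$. The global group $M$ of $G\Gamma$ maps identically onto the global group $M$ of $G\Delta$. For a vertex $v$ of $\Gamma$, the group $P_{0,v}$ maps identically onto $P_{0,f(v)}$. For an edge $e$ of $\Gamma$, each $P_{i,e}$ maps identically onto $P_{i,f(e)}$. Because $f$ is surjective on vertices, every $P_{0,w}$ with $w$ a vertex of $\Delta$ is hit. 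Because $f$ is surjective on edges, every $P_{i,e'}$ with $e'$ an edge of $\Delta$ is hit. Hence the image of $Ff$ contains a generating set, and $Ff$ is surjective. The one point to check is that Prze\'zdziecki's functor on morphisms is indeed induced by these identity maps between corresponding pieces of the diagram. This is how $F$ is defined on morphisms in \cite[Section~4]{Prz}, since the group types and incidence monomorphisms are fixed finite data independent of the graph.

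For the converse, assume $Ff$ is surjective. I will argue contrapositively: if $f$ misses a vertex $w$ or an edge $e'$ of $\Delta$, then $Ff$ is not surjective. Set $\Delta'\subseteq\Delta$ to be the image sub-$m$-graph, consisting of the image vertices and image edges; it is closed under source and target because $f$ is a graph map. Then $f$ factors as $\Gamma\to\Delta'\hookrightarrow\Delta$, so $Ff$ factors through $F\iota$, where $\iota:\Delta'\to\Delta$ is the inclusion. It therefore suffices to show that $F\iota$ is not surjective when $\iota$ is a proper inclusion.

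To prove this, I will construct a nontrivial homomorphism on $F\Delta$ that vanishes on the image of $F\iota$. The cleanest route avoids group theory and uses the almost-full bijection of Theorem~\ref{thm:almost-full}. Suppose $F\iota$ were surjective. Choose two $m$-graph maps $g_1,g_2:\Delta\to\Lambda$ that agree on $\Delta'$ but differ on the missed vertex or edge. For instance, let $\Lambda$ be $\Delta$ with the missed vertex $w$ (or edge $e'$) doubled; if an edge is doubled, double it with the same endpoints. Take $g_1$ to be the inclusion and $g_2$ to be the inclusion that picks the other copy, where duplicating $w$ also duplicates its incident edges. Then $g_1\circ\iota=g_2\circ\iota$, so $Fg_1\circ F\iota=Fg_2\circ F\iota$. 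Surjectivity of $F\iota$ would give $Fg_1=Fg_2$, hence equal classes modulo $\Inn(F\Lambda)$. Injectivity in \eqref{eq:almost-full} then forces $g_1=g_2$, a contradiction. (Both $Fg_i$ are nontrivial, since $M=F\varnothing$ embeds, so neither corresponds to $*$.) This also yields surjectivity of $f$ directly, without passing through $\Delta'$: apply the same argument to $f$ itself, using that $f$ is an epimorphism in $\mGraph$ exactly when it is surjective on both components, as noted earlier.

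I expect the main obstacle to be the forward direction. It depends on the concrete description of $Ff$ on vertex groups, which must be read off from \cite{Prz} rather than from the abstract Theorem~\ref{thm:almost-full}. The converse follows formally from faithfulness and injectivity of \eqref{eq:almost-full}, provided the construction of $\Lambda$ separating the missed vertex or edge is done carefully.
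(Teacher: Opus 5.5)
Your proposal is correct and is essentially the paper's proof: the forward direction uses that $Ff$ maps the vertex groups of $G\Gamma$ onto those of $G\Delta$, which generate $F\Delta$, and the converse cancels the surjection $Ff$ and uses faithfulness (your appeal to injectivity in \eqref{eq:almost-full} amounts to the same thing) to show that $f$ is an epimorphism in $\mGraph$. Your doubling construction only makes explicit why such epimorphisms are surjective on both vertices and edges, which the paper instead takes from the presheaf description of $\mGraph$.
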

\begin{proof}
The functor acts as the identity on $M$ and by the standard isomorphisms between copies of the same type on all other vertex groups. If $f$ is onto on vertices and edges, the image of $Ff$ therefore contains $M$ and every vertex group $P_{0,w}$ and $P_{i,e}$ of the target diagram. These generate $F\Delta$, so $Ff$ is onto.

Conversely, suppose $Ff$ is onto. If $u\circ f=v\circ f$, then $Fu\circ Ff=Fv\circ Ff$. Surjectivity gives $Fu=Fv$, and faithfulness gives $u=v$. Thus $f$ is an epimorphism in $\mGraph$, hence surjective on both components.
\end{proof}

\begin{lemma}\label{lem:continuous-F}
For every countable $m$-graph $\Lambda$, the map
\begin{equation}\label{eq:continuous-F}
 \End_{\mGraph}(\Lambda)\longrightarrow\End(F\Lambda),
 \qquad f\longmapsto Ff,
\end{equation}
is continuous for the pointwise-convergence topologies. It sends $\Epi_{\mGraph}(\Lambda)$ into $\Epi(F\Lambda)$.
\end{lemma}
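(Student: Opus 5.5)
The plan is to show continuity one generator at a time, using the explicit colimit presentation of $F\Lambda$. The topology on $\End(F\Lambda)$ is inherited from $(F\Lambda)^{F\Lambda}$ with $F\Lambda$ discrete. So it suffices to prove that, for each fixed $x\in F\Lambda$, the map $f\mapsto (Ff)(x)$ is locally constant on $\End_{\mGraph}(\Lambda)$. In other words, we need a neighborhood of $f$ on which $(Ff)(x)$ does not change.

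The first step is to write $x$ as a finite word in elements of the vertex groups $M$, $P_{0,v}$ and $P_{i,e}$ of the diagram $G\Lambda$. This is possible because these groups generate $F\Lambda$. Only finitely many vertices $v_1,\dots,v_r$ and edges $e_1,\dots,e_s$ of $\Lambda$ occur in the word.

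The second step is to use functoriality of Prze\'zdziecki's construction, as in the proof of Lemma~\ref{lem:surjections}. On $M$, the homomorphism $Ff$ is the identity. A generator in $P_{0,v}$ is sent, by the standard isomorphism of the fixed finite type, to the corresponding element of $P_{0,f(v)}$. A generator in $P_{i,e}$ is sent likewise to the corresponding element of $P_{i,f(e)}$. Since $Ff$ is a homomorphism, $(Ff)(x)$ is the product of these images. This product depends only on the finitely many values $f(v_1),\dots,f(v_r)$ and $f(e_1),\dots,f(e_s)$. The set of $g\in\End_{\mGraph}(\Lambda)$ agreeing with $f$ on these vertices and edges is a basic open neighborhood, and on it $(Fg)(x)=(Ff)(x)$. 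This proves continuity.

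The main point to check is that Prze\'zdziecki's $F$ really acts this way on generators. That is, $Ff$ must be induced by the morphism of diagrams $G\Lambda\to G\Lambda$ that relabels the indices by $f$ and uses identity isomorphisms between copies of the same type, and not by some choice that depends globally on $f$. This is how the functor is defined in \cite[Section~4]{Prz}: $G$ is functorial on indices, and $F$ takes colimits. The description is also already invoked in Lemma~\ref{lem:surjections}, so I will cite it. The final assertion, that $F$ sends $\Epi_{\mGraph}(\Lambda)$ into $\Epi(F\Lambda)$, is exactly the forward direction of Lemma~\ref{lem:surjections} applied with $\Gamma=\Delta=\Lambda$.
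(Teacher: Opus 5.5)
Your proposal is correct and follows essentially the same route as the paper. You write each $x\in F\Lambda$ as a finite word in the vertex groups $M$, $P_{0,v}$, $P_{i,e}$, use that $Ff$ relabels these by $f$ via the fixed standard isomorphisms so that $(Ff)(x)$ depends only on finitely many values of $f$, and obtain the final assertion from the forward direction of Lemma~\ref{lem:surjections}.
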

\begin{proof}
The group $F\Lambda$ is generated by the vertex groups of its fixed diagram. On these groups, $Ff$ fixes $M$, sends $P_{0,v}$ to $P_{0,f(v)}$, and sends $P_{i,e}$ to $P_{i,f(e)}$ by the prescribed isomorphisms between copies of the same finite group. For any $h\in F\Lambda$, choose a finite word in these groups representing $h$. Its image depends only on the values of $f$ at the finitely many vertex and edge labels occurring in that word. Thus $f\mapsto Ff(h)$ is continuous for each $h$, proving \eqref{eq:continuous-F}. The last assertion is Lemma~\ref{lem:surjections}.
\end{proof}

\begin{theorem}\label{prop:pro-reflection}
Applying $F$ levelwise induces a functor $\Pro(\mGraphEpi)\to\Pro(\GrpEpi)$. For any two inverse systems $\Gamma,\Delta$ in $\mGraphEpi$,
\begin{equation}\label{eq:pro-object-isomorphism}
 \Gamma\cong\Delta\text{ in }\Pro(\mGraphEpi)
 \quad\Longleftrightarrow\quad
 F\Gamma\cong F\Delta\text{ in }\Pro(\GrpEpi).
\end{equation}
\end{theorem}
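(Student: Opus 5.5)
The plan is to check functoriality first and then prove the two implications of \eqref{eq:pro-object-isomorphism} separately. The reverse implication is where the real work lies.

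\emph{Functoriality.} By Lemma~\ref{lem:surjections}, $F$ sends surjective $m$-graph maps to surjective homomorphisms. Applying $F$ to each object and bonding map of an inverse system in $\mGraphEpi$ therefore gives an inverse system in $\GrpEpi$. Because $F$ is a functor, it preserves the identity and composition laws and the pre-morphism identity \eqref{eq:premorphism}. It also preserves the equivalence \eqref{eq:premorphism-equivalence} and the composition rule of pre-morphisms. Hence levelwise application is well defined on morphisms and defines a functor. The forward direction of \eqref{eq:pro-object-isomorphism} follows at once, since functors preserve isomorphisms.

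\emph{Reverse direction.} Suppose $F\Gamma\cong F\Delta$. I will choose inverse pre-morphisms $(\phi,\alpha)$ and $(\psi,\beta)$ with stage homomorphisms $\alpha_n:F\Gamma_{\phi(n)}\to F\Delta_n$ and $\beta_n:F\Delta_{\psi(n)}\to F\Gamma_n$. Each $\alpha_n$ is surjective, because pre-morphisms in $\Pro(\GrpEpi)$ have stage maps in $\GrpEpi$. Since $M\le F\Delta_n$ is nontrivial, $\alpha_n$ is nontrivial. Theorem~\ref{thm:almost-full} then gives $\alpha_n=c_{a_n}\circ Ff_n$ for a unique $m$-graph map $f_n$. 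By Lemma~\ref{lem:surjections}, $f_n$ is surjective, because $Ff_n=c_{a_n}^{-1}\circ\alpha_n$ is onto.

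Next I check coherence. For $m\ge n$, the identity $F(p^\Delta_{m,n})\circ\alpha_m=\alpha_n\circ F(p^\Gamma_{\phi(m),\phi(n)})$ can be rewritten as
\[
 c_{F(p^\Delta_{m,n})(a_m)}\circ F(p^\Delta_{m,n}\circ f_m)
 =c_{a_n}\circ F(f_n\circ p^\Gamma_{\phi(m),\phi(n)}).
\]
Both sides are nontrivial homomorphisms. By the uniqueness in Theorem~\ref{thm:almost-full}, the underlying graph maps agree, which is \eqref{eq:premorphism} for $(\phi,f)$. In the same way, $\beta$ yields a pre-morphism $(\psi,g)$ in $\Pro(\mGraphEpi)$.

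Finally I show that these two pre-morphisms are inverse. By Lemma~\ref{lem:identity}, applied in $\GrpEpi$ where surjections are epimorphisms, we have $\beta_n\circ\alpha_{\psi(n)}=F(p^\Gamma_{\phi\psi(n),n})$. The left side equals $c_b\circ F(g_n\circ f_{\psi(n)})$ for some $b$, because $\beta_n\circ c_{a}=c_{\beta_n(a)}\circ\beta_n$. Uniqueness in Theorem~\ref{thm:almost-full} then gives $g_n\circ f_{\psi(n)}=p^\Gamma_{\phi\psi(n),n}$. The other composite is handled symmetrically. Lemma~\ref{lem:identity} in $\mGraphEpi$ then shows that $(\phi,f)$ and $(\psi,g)$ are mutually inverse, so $\Gamma\cong\Delta$.

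The main obstacle is the careful handling of the inner-automorphism ambiguity: conjugations must be pushed through $F(p)$ and through $\beta_n$ so that each identity has the form $c\circ F(\cdot)$. Uniqueness can only be applied once both sides are in that form and are known to be nontrivial. Nontriviality holds throughout because every map involved is surjective onto a group containing $M\neq1$.
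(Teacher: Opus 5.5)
Your proposal is correct and follows the paper's proof essentially step by step. You extract graph maps $f_n,g_n$ from the almost-full bijection, push the conjugations through the bonding maps and the composites, apply uniqueness to get exact commutation and the inverse identities, and finish with Lemma~\ref{lem:identity} in both categories. The only difference is cosmetic: you justify nontriviality by surjectivity onto a group containing $M$, whereas the paper notes that the unconjugated maps restrict to the identity on $M$, and either argument works.
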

\begin{proof}
The induced functor and the forward implication follow from Lemma~\ref{lem:surjections} and functoriality. For the reverse implication, write $\Gamma=(\Gamma_n,p_{m,n})$ and $\Delta=(\Delta_n,q_{m,n})$. Assume the right-hand side of \eqref{eq:pro-object-isomorphism}. Choose mutually inverse pro-morphisms represented by $(\phi,\alpha)$ and $(\psi,\beta)$, with stage maps
\begin{equation}\label{eq:group-stage-maps}
 \alpha_n:F\Gamma_{\phi(n)}\twoheadrightarrow F\Delta_n,
 \qquad\beta_n:F\Delta_{\psi(n)}\twoheadrightarrow F\Gamma_n
 \qquad(n\geq1).
\end{equation}
Each stage map is nontrivial, since it is onto a group containing the nontrivial subgroup $M$. By \eqref{eq:almost-full}, there are $m$-graph maps $f_n:\Gamma_{\phi(n)}\to\Delta_n$ and $g_n:\Delta_{\psi(n)}\to\Gamma_n$, and elements $a_n\in F\Delta_n$ and $b_n\in F\Gamma_n$, such that
\begin{equation}\label{eq:group-stage-decomposition}
 \alpha_n=c_{a_n}\circ Ff_n,\qquad
 \beta_n=c_{b_n}\circ Fg_n
 \qquad(n\geq1).
\end{equation}
Since inner conjugation is an automorphism, $Ff_n$ and $Fg_n$ are surjective. Lemma~\ref{lem:surjections} therefore makes $f_n$ and $g_n$ surjective on vertices and edges.

For $m\geq n$, use the identity $Fq_{m,n}\circ c_{a_m}=c_{Fq_{m,n}(a_m)}\circ Fq_{m,n}$. The group commutation equation
\begin{equation}\label{eq:group-commutation}
 Fq_{m,n}\circ\alpha_m
 =\alpha_n\circ Fp_{\phi(m),\phi(n)}
 \qquad(m\geq n\geq1)
\end{equation}
becomes, after substituting \eqref{eq:group-stage-decomposition},
\begin{equation}\label{eq:expanded-group-commutation}
 c_{Fq_{m,n}(a_m)}\circ F(q_{m,n}\circ f_m)
 =c_{a_n}\circ F(f_n\circ p_{\phi(m),\phi(n)}).
\end{equation}
Both homomorphisms in \eqref{eq:expanded-group-commutation} are nontrivial: before conjugation, they restrict to the identity on the distinguished copy of $M$. Uniqueness of the $m$-graph map in Theorem~\ref{thm:almost-full} therefore yields
\begin{equation}\label{eq:graph-commutation}
 q_{m,n}\circ f_m=f_n\circ p_{\phi(m),\phi(n)}
 \qquad(m\geq n\geq1).
\end{equation}
Thus $(\phi,f)$ is an $m$-graph pre-morphism, as is $(\psi,g)$.

For $m$-graph maps $f:\Gamma\to\Delta$ and $g:\Delta\to\Xi$, and elements $a\in F\Delta$ and $b\in F\Xi$, the identity
\begin{equation}\label{eq:conjugation}
 (c_b\circ Fg)\circ(c_a\circ Ff)
 =c_{b\,Fg(a)}\circ F(g\circ f)
\end{equation}
shows that extracting the $m$-graph map respects composition. By Lemma~\ref{lem:identity}, the two composite stage maps satisfy
\begin{equation}\label{eq:group-pro-composites}
 \beta_n\circ\alpha_{\psi(n)}=Fp_{\phi(\psi(n)),n},\qquad
 \alpha_n\circ\beta_{\phi(n)}=Fq_{\psi(\phi(n)),n}
 \qquad(n\geq1).
\end{equation}
These maps are nontrivial, since the right-hand sides fix $M$. Uniqueness in Theorem~\ref{thm:almost-full}, together with \eqref{eq:conjugation}, gives
\begin{equation}\label{eq:graph-pro-composites}
 g_n\circ f_{\psi(n)}=p_{\phi(\psi(n)),n},\qquad
 f_n\circ g_{\phi(n)}=q_{\psi(\phi(n)),n}
 \qquad(n\geq1).
\end{equation}
Lemma~\ref{lem:identity} now shows that the two $m$-graph pre-morphisms are inverse.
\end{proof}

The uniqueness in Theorem~\ref{thm:almost-full} turns equality modulo inner conjugacy into exact commutation of graph maps. Extracting these maps removes the conjugating elements at each stage.

\subsection{Intrinsic reconstruction from the limit}
Gao, Nies, and Paolini proved the following inverse-limit criterion \cite[Lemma~3.7(ii)]{GNP}. We include the argument to make the recovery of the stage maps explicit.

\begin{lemma}\label{lem:limits}
Two inverse systems $K,L$ in $\GrpEpi$, indexed by $\N_{>0}$, have topologically isomorphic inverse limits if and only if they are isomorphic in $\Pro(\GrpEpi)$.
\end{lemma}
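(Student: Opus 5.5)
The plan is to prove both directions directly from the definitions, writing $K=(K_n,p_{m,n})$ and $L=(L_n,q_{m,n})$ and letting $\pi^K_n:\varprojlim K\to K_n$ and $\pi^L_n:\varprojlim L\to L_n$ be the coordinate projections. First we record two facts. Each projection is a continuous surjective homomorphism; surjectivity holds because the bonding maps are onto, so a coherent thread can be built inductively through any given element. The kernels $U^K_n=\ker\pi^K_n$ form a decreasing neighborhood basis of the identity consisting of open normal subgroups, and $\varprojlim K/U^K_n\cong K_n$ via $\pi^K_n$.

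For the forward direction, take a pre-morphism $(\phi,f)$ from $K$ to $L$. We define $\Phi(x)=(f_n(x_{\phi(n)}))_n$. By \eqref{eq:premorphism} this is a thread of $L$, and $\Phi$ is a continuous homomorphism with $\pi^L_n\circ\Phi=f_n\circ\pi^K_{\phi(n)}$. Equivalent pre-morphisms induce the same map. To see this, evaluate both sides of \eqref{eq:premorphism-equivalence} on $\pi^K_r(x)$ and use $p_{r,\phi(n)}\circ\pi^K_r=\pi^K_{\phi(n)}$. The assignment is also compatible with composition and identities. Mutually inverse pro-isomorphisms therefore give mutually inverse continuous homomorphisms, that is, a topological isomorphism.

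For the converse, let $\Phi:\varprojlim K\to\varprojlim L$ be a topological isomorphism. For each $n$, the subgroup $\Phi^{-1}(U^L_n)$ is open, so it contains some $U^K_{\ell}$. We choose $\phi(n)$ strictly increasing with $U^K_{\phi(n)}\subseteq\Phi^{-1}(U^L_n)$. The homomorphism $\pi^L_n\circ\Phi$ then factors through the surjection $\pi^K_{\phi(n)}$, giving a unique $f_n:K_{\phi(n)}\to L_n$ with $f_n\circ\pi^K_{\phi(n)}=\pi^L_n\circ\Phi$. Each $f_n$ is onto because $\Phi$ and $\pi^L_n$ are onto. Compatibility \eqref{eq:premorphism} follows by precomposing both sides with the surjection $\pi^K_{\phi(m)}$ and cancelling it. The same construction applied to $\Phi^{-1}$ gives $(\psi,g)$.

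It remains to show that these pre-morphisms are inverse. By the construction, $g_n\circ f_{\psi(n)}\circ\pi^K_{\phi(\psi(n))}=\pi^K_n\circ\Phi^{-1}\circ\Phi=\pi^K_n=p_{\phi(\psi(n)),n}\circ\pi^K_{\phi(\psi(n))}$. Cancelling the surjective projection gives the identity required by Lemma~\ref{lem:identity}. The other composite is handled symmetrically, so Lemma~\ref{lem:identity} shows that the two pre-morphisms are inverse.

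I expect the main obstacle to be bookkeeping rather than substance. One point is the surjectivity of the projections, which every cancellation step depends on. The other is checking that the composite of pre-morphisms, with its index map $\phi\circ\psi$, matches the indices arising in Lemma~\ref{lem:identity}. Making these explicit also records how the stage maps $f_n$ are recovered intrinsically from $\Phi$, which is what the later sections need.
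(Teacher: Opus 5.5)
Your proposal is correct and follows essentially the same route as the paper. For one direction you use the induced limit map characterized by $\pi^L_n\circ\Phi=f_n\circ\pi^K_{\phi(n)}$. For the converse you choose a strictly increasing $\phi$ with $\ker\pi^K_{\phi(n)}\subseteq\ker(\pi^L_n\circ\Phi)$, factor through the surjective projections, and cancel them to get both the pre-morphism identities and the composite identities $g_n\circ f_{\psi(n)}=p_{\phi(\psi(n)),n}$ that feed into Lemma~\ref{lem:identity}.
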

\begin{proof}
Write $K=(K_n,p^K_{m,n})$ and $L=(L_n,p^L_{m,n})$, with every stage carrying the discrete topology.
A pre-morphism $(\phi,f):K\to L$ induces a continuous homomorphism $U:\varprojlim K\to\varprojlim L$. Let $\kappa_i:\varprojlim K\to K_i$ and $\lambda_i:\varprojlim L\to L_i$ be the coordinate projections. Then $U$ is characterized by
\begin{equation}\label{eq:induced-limit-map}
 \lambda_n\circ U=f_n\circ\kappa_{\phi(n)}\qquad(n\geq1).
\end{equation}
Equation~\eqref{eq:premorphism} makes these coordinates coherent. Equivalent pre-morphisms induce the same map, and composition is preserved. Thus inverse pro-morphisms give a topological isomorphism.

Conversely, let $u:\varprojlim K\to\varprojlim L$ be a topological isomorphism. The projections $\kappa_n$ and $\lambda_n$ are onto because the bonding maps are onto. The descending kernels $\ker\kappa_i$ form a neighborhood basis at the identity. Continuity therefore allows a strictly increasing choice of $\phi(n)$ with $\ker\kappa_{\phi(n)}\subseteq\ker(\lambda_n\circ u)$. This gives unique homomorphisms $\alpha_n:K_{\phi(n)}\to L_n$ satisfying
\begin{equation}\label{eq:limit-factorization}
 \lambda_n\circ u=\alpha_n\circ\kappa_{\phi(n)}
 \qquad(n\geq1).
\end{equation}
Each $\alpha_n$ is onto because $\lambda_n\circ u$ is onto. For $m\geq n$, the factorization equations give
\begin{equation}\label{eq:limit-factor-square}
 \begin{aligned}
 p^L_{m,n}\alpha_m\kappa_{\phi(m)}
 &=p^L_{m,n}\lambda_m u
 =\lambda_nu
 =\alpha_n\kappa_{\phi(n)}\\
 &=\alpha_np^K_{\phi(m),\phi(n)}\kappa_{\phi(m)}.
 \end{aligned}
\end{equation}
Surjectivity of $\kappa_{\phi(m)}$ yields the pre-morphism identity. Applying the same construction to $u^{-1}$ gives a pre-morphism $(\psi,\beta):L\to K$.

For one composite, the factorization equations give
\begin{equation}\label{eq:limit-composite-cancellation}
 (\beta_n\circ\alpha_{\psi(n)})\circ\kappa_{\phi(\psi(n))}
 =\kappa_n
 =p^K_{\phi(\psi(n)),n}\circ\kappa_{\phi(\psi(n))}
 \qquad(n\geq1).
\end{equation}
Since $\kappa_{\phi(\psi(n))}$ is surjective, \eqref{eq:limit-composite-cancellation} yields $\beta_n\circ\alpha_{\psi(n)}=p^K_{\phi(\psi(n)),n}$. The analogous calculation for the other composite and Lemma~\ref{lem:identity} show that the pre-morphisms are inverse.
\end{proof}

Thus continuity recovers the coordinate factors of any topological isomorphism. The chosen inverse-system presentation is not part of the final invariant.

\section{A fixed stage group and Borel realization}\label{sec:borel}

\subsection{Continuous families of bonding maps}
We now put every stage on the same countable group. This gives a continuous realization of the weight relation for any fixed graph.

\begin{proposition}\label{prop:fixed-realization}
Let $\C=(D,E)$ be a countably infinite irreflexive directed graph, construct $\Gamma(a)$ as in Section~\ref{sec:trees}, and put $H=F\Delta$, where $\Delta=\Gamma_2(0)$. There is a continuous map $\alpha:\N^D\to\mathcal E_H$ such that
\begin{equation}\label{eq:fixed-stage-classification}
 \alpha(a)\sim_H\alpha(a')
 \quad\Longleftrightarrow\quad
 \exists\pi\in\Aut(\C)\quad
 \sup_{d\in D}|a(d)-a'(\pi d)|<\infty.
\end{equation}
\end{proposition}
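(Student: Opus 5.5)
The map is $\alpha(a)_k=Fq_k^a$ for $k\geq1$, where $q_k^a$ is the normalized bonding map from Lemma~\ref{lem:fixed-stage-graph}. Since $q_k^a\in\Epi_{\mGraph}(\Delta)$, Lemma~\ref{lem:continuous-F} places $Fq_k^a$ in $\Epi(H)$. Each coordinate $a\mapsto Fq_k^a$ is the composite of the continuous maps of Lemma~\ref{lem:fixed-stage-graph} and Lemma~\ref{lem:continuous-F}. Since $\mathcal E_H$ carries the product topology, $\alpha$ is continuous.

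Next I identify $G_H(\alpha(a))$ with the inverse limit of $F\Gamma(a)$ after discarding its first stage. Let $\Gamma'(a)$ be the shifted system with stages $\Gamma'_k(a)=\Gamma_{k+1}(a)$. The isomorphisms $\theta^a_{k+1}$ give a levelwise isomorphism in $\Pro(\mGraphEpi)$ from $\Gamma'(a)$ to the system $(\Delta,q^a_k)$. The squares commute by the definition of $q_k^a$, so this uses identity index maps. Applying $F$, the system $(H,Fq_k^a)$ is isomorphic in $\Pro(\GrpEpi)$ to $F\Gamma'(a)$. Its inverse limit is exactly $G_H(\alpha(a))$ as defined in \eqref{eq:fixed-group-limit}.

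I also need $\Gamma'(a)\cong\Gamma(a)$ in $\Pro(\mGraphEpi)$. Use the pre-morphism $\Gamma(a)\to\Gamma'(a)$ with index $n\mapsto n+1$ and identity stage maps, together with $\Gamma'(a)\to\Gamma(a)$ with identity index and stage maps $p^a_{n+1,n}$. Both composites are given by bonding maps, so Lemma~\ref{lem:identity} shows they are inverse. Thus $(H,Fq^a_k)\cong F\Gamma(a)$ in $\Pro(\GrpEpi)$.

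The chain of equivalences is then:
\begin{itemize}
\item $\alpha(a)\sim_H\alpha(a')$ holds iff the limits are topologically isomorphic;
\item by Lemma~\ref{lem:limits}, this holds iff $(H,Fq^a)\cong(H,Fq^{a'})$ in $\Pro(\GrpEpi)$;
\item by the previous paragraph, this holds iff $F\Gamma(a)\cong F\Gamma(a')$;
\item by Theorem~\ref{prop:pro-reflection}, this holds iff $\Gamma(a)\cong\Gamma(a')$ in $\Pro(\mGraphEpi)$;
\item by Theorem~\ref{thm:weight-reconstruction}, (1)$\Leftrightarrow$(2), this is the right side of \eqref{eq:fixed-stage-classification}.
\end{itemize}

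The main point needing care is the bookkeeping in the shift and normalization step. One must check that conjugating by the $\theta$'s gives an honest isomorphism in $\Pro(\GrpEpi)$ with commuting squares on the nose, and that the coordinate indexing of \eqref{eq:fixed-group-limit} matches, with $\alpha_n(h_{n+1})=h_n$ corresponding to $q_n$. Continuity is already packaged in the cited lemmas.
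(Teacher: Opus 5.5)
Your proof is correct and follows the paper's argument: the same $\alpha(a)=(Fq_k^a)_{k\geq1}$, continuity from Lemmas~\ref{lem:fixed-stage-graph} and~\ref{lem:continuous-F}, and the chain through Lemma~\ref{lem:limits}, Theorem~\ref{prop:pro-reflection}, and Theorem~\ref{thm:weight-reconstruction}. The only difference is where the first stage is discarded: you do it with explicit inverse pre-morphisms in $\Pro(\mGraphEpi)$, while the paper does it on the inverse-limit groups, and your version checks out.
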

\begin{proof}
Use Lemma~\ref{lem:fixed-stage-graph} to define
\begin{equation}\label{eq:normalized-group-maps}
 \alpha(a)=(Fq_k^a)_{k\geq1}\in\mathcal E_H.
\end{equation}
This map is continuous by Lemmas~\ref{lem:fixed-stage-graph} and~\ref{lem:continuous-F}; its coordinates are surjective by Lemma~\ref{lem:surjections}. The isomorphisms $F\theta_{k+1}^a$ identify this system with the original group system starting at level two. Hence
\begin{equation}\label{eq:normalized-limit-isomorphism}
 G_H(\alpha(a))
 \topiso\varprojlim_{n\geq2}F\Gamma_n(a)
 \topiso\varprojlim_{n\geq1}F\Gamma_n(a).
\end{equation}
Deleting the first coordinate gives the second identification: that coordinate is recovered continuously from the second by the first bonding homomorphism. Theorem~\ref{thm:weight-reconstruction}, Theorem~\ref{prop:pro-reflection}, and Lemma~\ref{lem:limits} now give \eqref{eq:fixed-stage-classification}.
\end{proof}

Thus the relation on weights is realized with a fixed group at every level. Its dependence on the weights is entirely in the bonding endomorphisms.

\subsection{Closed permutation groups}
We finish by realizing the limits in the standard parameter space. The following construction is the fixed-group form of \cite[Remark~3.2]{GNP}; we include the verification of Borelness. Recall that the Effros Borel structure on $\mathcal F(S_\infty)$ is generated by
\begin{equation}\label{eq:effros-generators}
 \{K\in\mathcal F(S_\infty):K\cap U\ne\varnothing\},
 \qquad U\subseteq S_\infty\text{ open};
\end{equation}
see \cite{Effros} and \cite[Section~12.C]{Kechris}.

\begin{proposition}\label{prop:borel-realization}
For every countable group $H$, there is a Borel map
$R_H:\mathcal E_H\to\PC$ satisfying
$R_H(\alpha)\topiso G_H(\alpha)$ for every $\alpha\in\mathcal E_H$.
Consequently, $\sim_H$ is analytic.
\end{proposition}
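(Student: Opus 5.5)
Fix an enumeration $(h_j)_{j\in\N}$ of $H$ (if $H$ is finite, pad with repetitions; the argument only needs a surjection $\N\to H$). The realization acts $G_H(\alpha)$ on a fixed countable set built from $H$ itself: let $\Omega=\N_{>0}\times H$, identified with $\N$ by a fixed bijection, and let $(h_n)\in G_H(\alpha)$ act by left translation on each fiber,
\begin{equation*}
 (h_n)\cdot(k,g)=(k,h_kg)\qquad(k\geq1,\ g\in H).
\end{equation*}
This defines a homomorphism $\rho_\alpha:G_H(\alpha)\to\operatorname{Sym}(\Omega)\cong S_\infty$. I would set $R_H(\alpha)=\rho_\alpha(G_H(\alpha))$.

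First I check that $\rho_\alpha$ is a topological isomorphism onto a closed subgroup. It is injective because left translation on $H$ is faithful. It is continuous and open onto its image because the basic stabilizer of finitely many points of $\Omega$ in the fibers $k\leq n$ is exactly the image of $\ker\kappa_n$, the kernel of the projection to coordinates $1,\dots,n$ (coherence makes it the kernel of the $n$th projection). For closedness, take a limit permutation $\sigma$ of elements $\rho_\alpha(x^{(i)})$. On each fiber, $\sigma$ eventually agrees pointwise with left translation by $x^{(i)}_k$, and since $\sigma(k,1)$ stabilizes this forces $x^{(i)}_k$ to be eventually constant, say $h_k$. The relations $\alpha_k(h_{k+1})=h_k$ pass to the limit, so $\sigma=\rho_\alpha((h_k))$. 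Hence $R_H(\alpha)$ is a closed subgroup, topologically isomorphic to $G_H(\alpha)$. By definition it is an inverse limit of countable discrete groups with surjective bonding maps, so it lies in $\PC$; the paper's $\PC$ is the class of such closed subgroups.

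The main obstacle is Borelness into the Effros space. By \eqref{eq:effros-generators} it suffices to show that, for each basic open set $U=\{\sigma:\sigma(\omega_i)=\eta_i,\ i<N\}$, the set of $\alpha$ with $R_H(\alpha)\cap U\neq\varnothing$ is Borel. Such a condition prescribes finitely many coordinates $h_1,\dots,h_n$ with $h_k g_i=g'_i$ on the relevant fibers. It therefore asks whether some coherent thread $(h_k)_{k\le n}$ with those prescribed values exists. Because every $\alpha_k$ is surjective, any finite coherent tuple $(h_1,\dots,h_n)$ with $\alpha_k(h_{k+1})=h_k$ for $k<n$ extends to a full thread. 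The condition is thus the existence of an $n$-tuple in $H^n$ meeting finitely many equations of the form $\alpha_k(h_{k+1})=h_k$ together with the prescribed values. This is a countable union, over tuples, of clopen conditions on finitely many evaluations $\alpha_k(h)$, so the set is open, and in particular Borel.

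For the consequence, $\alpha\sim_H\beta$ holds iff $R_H(\alpha)\naiso R_H(\beta)$. Since $\naiso$ is analytic by Proposition~\ref{prop:ambient}, $\sim_H$ is the preimage of an analytic set under the Borel map $R_H\times R_H$, and is therefore analytic.
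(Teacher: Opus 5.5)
Your proof is correct and follows the paper's argument essentially step for step: the same left-translation action of $G_H(\alpha)$ on $\N_{>0}\times H$, the same faithfulness, stabilizer and closedness checks, the same Effros test using surjectivity of the $\alpha_k$ to extend finite coherent tuples, and the same pullback of $\naiso$ for analyticity. The differences are cosmetic: the paper notes that the finite tuple is forced downward from its top prescribed coordinate, so its test is clopen rather than merely open, and it explicitly disposes of basic sets whose constraints cross fibers or force conflicting multipliers, a case you leave implicit.
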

\begin{proof}
Fix a bijection between $\Omega=\N_{>0}\times H$ and $\N$, and use it to identify $\operatorname{Sym}(\Omega)$ with $S_\infty$. For $\alpha\in\mathcal E_H$, let $G_H(\alpha)$ act on $\Omega$ by
\begin{equation}\label{eq:left-regular-action}
 \lambda_\alpha(g)(i,h)=(i,g_i h)
 \qquad(g=(g_i)\in G_H(\alpha),\ i\geq1,\ h\in H).
\end{equation}
Write $D_i=\{i\}\times H$ and $\xi_i=(i,1_H)$, and let $R_H(\alpha)$ be the image group. The images of the $\xi_i$ determine $g$, so the action is faithful. If $\kappa_i:G_H(\alpha)\to H$ is the coordinate projection, every point of $D_i$ has stabilizer $\ker\kappa_i$ under this action. The coordinate kernels form a neighborhood basis at the identity. Finite intersections of point stabilizers therefore give precisely the inverse-limit topology, so $\lambda_\alpha$ is a topological isomorphism onto its image.

If $\lambda_\alpha(g^{(r)})$ converges in $S_\infty$, the images of $\xi_i$ show that $g_i^{(r)}$ is eventually constant for every $i$. These eventual values are coherent and induce the limit permutation. Thus $R_H(\alpha)$ is closed and belongs to $\PC$.

For $j\geq i\geq1$, put
\begin{equation}\label{eq:fixed-bonding-composites}
 P_{j,i}(\alpha)=
 \begin{cases}
 \alpha_i\circ\alpha_{i+1}\circ\cdots\circ\alpha_{j-1},&j>i,\\
 \mathrm{id}_H,&j=i.
 \end{cases}
\end{equation}
Let $s$ be a finite injective partial permutation of $\Omega$ and let $[s]$ be the basic open set of its extensions. A constraint $(i,h)\mapsto(j,h')$ requires $j=i$ and forces the multiplier $c_i=h'h^{-1}$. All constraints on $D_i$ must force the same $c_i$; if either requirement fails, $[s]$ misses every $R_H(\alpha)$. Otherwise let $I_s$ be the finite set of constrained indices. If $I_s\ne\varnothing$, put $m=\max I_s$. Then
\begin{equation}\label{eq:effros-test}
 [s]\cap R_H(\alpha)\ne\varnothing
 \quad\Longleftrightarrow\quad
 P_{m,i}(\alpha)(c_m)=c_i\quad\text{for every }i\in I_s.
\end{equation}
A coherent tuple gives these identities. Conversely, extend $c_m$ downward by the bonding maps and upward by successive lifts under the surjective maps $\alpha_j$. The resulting coherent tuple acts as prescribed by $s$. Each evaluation in \eqref{eq:effros-test} depends continuously on $\alpha$, so the displayed test is Borel. For empty $s$, the identity permutation suffices. The sets $[s]$ form a countable basis, proving Borelness of $R_H$ for \eqref{eq:effros-generators}. Finally, \eqref{eq:fixed-group-relation} and Proposition~\ref{prop:ambient} show that $\sim_H$ is analytic.
\end{proof}

\begin{proof}[Proof of Theorems~\ref{thm:main} and~\ref{thm:fixed-group}]
Choose $\C$ from Lemma~\ref{lem:fixed-graph} and $H$ and $\alpha$ from Proposition~\ref{prop:fixed-realization}. Propositions~\ref{prop:quantization} and~\ref{prop:fixed-realization}, together with \eqref{eq:graph-weights}, give
\begin{equation}\label{eq:final-reduction}
 b\approx_\perm b'
 \quad\Longleftrightarrow\quad w_b\mathrel{\Ebd}w_{b'}
 \quad\Longleftrightarrow\quad
 \alpha(\widetilde w_b)\sim_H\alpha(\widetilde w_{b'})
 \qquad(b,b'\in\B).
\end{equation}
The assignment $b\mapsto\alpha(\widetilde w_b)$ is Borel: the weights are Borel by \eqref{eq:weight}, extension by zero is continuous, and $\alpha$ is continuous. Proposition~\ref{prop:flr-source} supplies the universal analytic source, while Proposition~\ref{prop:borel-realization} supplies analyticity of $\sim_H$. This proves Theorem~\ref{thm:fixed-group}.

The map $R_H$ reduces $\sim_H$ to $\pciso$, so $\pciso$ is universal analytic by Proposition~\ref{prop:ambient}. The Borel inclusion $\PC\hookrightarrow\Subg(S_\infty)$ then gives the same conclusion for $\naiso$, proving Theorem~\ref{thm:main}.
\end{proof}

\section{Limitations}\label{sec:limitations}

The construction does not impose abelianness, nilpotency, or a fixed exponent on the resulting groups. Universality on these subclasses remains outside the present argument. Gao, Nies, and Paolini's $\ell_\infty$ lower bound holds for nilpotency class two and each fixed odd-prime exponent \cite[Theorem~2.4, Section~5, and Theorem~5.7]{GNP}. Extending that result to universal analytic complexity calls for an encoding that also retains the permitted permutations and the compatibility of stage maps.

By Lemma~\ref{lem:attached}, the topology of $[T_a]$ identifies the set of distinguished branches $\{z_d:d\in D\}$, but it does not recover the edge relation of $\C$ on that set. The copies of $\C$ in the graphs $\Gamma_n(a)$ force the induced permutation to lie in $\Aut(\C)$. Therefore the present construction does not settle whether uniform homeomorphism of Polish ultrametric spaces is universal analytic, a question posed by Gao, Nies, and Paolini \cite[Question~1.6]{GNP}. This suggests encoding the incidence relation directly in the ultrametric space while preserving the counting estimate \eqref{eq:window}.

\section*{Declaration on generative AI and AI-assisted technology}

We employed our developed Agent TARS to assist with research-related tasks, and GPT-6-Astra for language-editing support. Xinan~Dai independently carried out error correction, mathematical verification, and manuscript curation. The authors take full responsibility for the mathematical arguments, references, and final content of this article.

\end{document}